\patchcmd{\subsubsection}{\itshape}{\bfseries}{}{}
\newcommand{\custMR}[1]{\href{http://www.ams.org/mathscinet-getitem?mr=#1}{MR#1}}
\newcommand{\arxiv}[1]{\href{http://arxiv.org/abs/#1}{arXiv:#1}}
\newcommand{\defeq}{\vcentcolon=}
\newcommand{\brax}[1]{\left( #1 \right)}
\newcommand{\abs}[1]{\left|#1\right|}
\renewcommand{\le}{\leqslant}
\renewcommand{\geq}{\geqslant}
\renewcommand{\ge}{\geqslant}
\newcommand{\N}{\mathbb{N}}
\newcommand{\R}{\mathbb{R}} 
\newcommand{\Z}{\mathbb{Z}}
\newcommand{\Mod}[1]{\ (\text{mod}\ #1)}
\renewcommand{\Mod}[1]{{\ifmmode\text{\rm\ (mod~$#1$)}\else\discretionary{}{}{\hbox{ }}\rm(mod~$#1$)\fi}}
\newcommand{\allnotes}[1]{}
\renewcommand{\allnotes}[1]{\textit{#1}}
\newtheorem{theorem}{Theorem}[section]
\newtheorem{prop}[theorem]{Proposition}
\newtheorem{lemma}[theorem]{Lemma}
\newtheorem{cor}[theorem]{Corollary}
\newtheorem{conjecture}[theorem]{Conjecture}
\theoremstyle{definition} 
\newtheorem*{note}{Note}
\numberwithin{theorem}{section}
\begin{document}

\parskip \smallskipamount

\title{On subsets of lattice cubes avoiding affine and spherical degeneracies}

\author{Anubhab Ghosal}
\author{Ritesh Goenka}
\author{Peter Keevash}
\address{Mathematical Institute \\ University of Oxford \\ Oxford, UK OX2 6GG}
\email{\{ghosal,goenka,keevash\}@maths.ox.ac.uk}

\subjclass[2020]{05D40, 52C10, 52C35}
\keywords{no-four-on-a-circle, no-three-in-line, evasive sets.}

\begin{abstract}
    For integers $1 < k < d-1$ and $r \ge k+2$, we establish new lower bounds on the maximum number of points in $[n]^d$ such that no $r$ lie in a $k$-dimensional affine (or linear) subspace. These bounds improve on earlier results of Sudakov-Tomon and Lefmann. Further, we provide a randomised construction for the no-four-on-a-circle problem posed by Erd\H{o}s and Purdy, improving Thiele's bound. We also consider the random construction in higher dimensions, and improve the bound of Suk and White for $d \geq 4$. In each case, we apply the deletion method, using results from number theory and incidence geometry to solve the associated counting problems.
\end{abstract}

\maketitle

\section{Introduction}
\label{sec:intro}

At the beginning of the $20$th century, Dudeney~\cite{Dud} posed the following problem: How many points can be placed in an $n \times n$ integer grid so that no three points lie in a straight line? This is now well known as the \emph{no-three-in-line} problem and has been extensively studied (see \cite[Section~10.1]{BMP} and \cite[F4]{Guy}). Several similar problems have been studied since then, namely, problems that seek to maximize the size of subsets of lattice cubes while avoiding specific configurations. Such problems are fundamental not only in discrete geometry but also relevant across other fields. For instance, higher dimensional finite field analogues of no-three-in-line sets provide examples of \emph{evasive sets}~\cite{PR} (see also \cite{ST}), which provide explicit constructions of list-decodable error correcting codes~\cite{Gur} and bipartite Ramsey graphs~\cite{PR}, and are also related to the \emph{cap set} problem~\cite{CLP,EG} from extremal combinatorics. The no-three-in-line problem can also be viewed as a discrete analogue of Heilbronn's triangle problem~\cite[F4]{Guy} (see \cite{CPZ} for recent progress on the latter) from discrepancy theory, which asks for a placement of points in the unit square that maximizes the area of the smallest triangle.

Brass and Knauer~\cite{BK} asked the following generalized problem: given $n, d, k, r \in \N$ with $k < d$ and $r \ge k + 2$, what is the maximum number $f_{\text{aff}}(n, d, k, r)$ of points in $[n]^d$ such that no $r$ of these points lie on a $k$-dimensional affine subspace? The case $r = k+2$ \cite[Section 10.1, Problem 5]{BMP} is of special interest since it corresponds to sets that avoid up to $k$-dimensional affine degeneracies. Further specializing to $k = d-1$ \cite[Section~10.1, Problem 3]{BMP} is even more interesting since it corresponds to sets in general linear position. We will prove the following lower bound on $f_{\text{aff}}(n, d, k, r)$.

\begin{theorem}
\label{thm:affine}
    Let $d, k, r \in \N$ with $k < d$ and $r \ge k + 2$. Then $f_{\mathrm{aff}}(n, d, k, r) = \Omega(f_{d, k, r}(n))$, where
    \begin{equation*}
        f_{d, k, r}(n) = \begin{cases}
            n^{d(1-k/(r-1))}, &\text{ if } r \le d,\\
            n^{d-k}/ (\log n)^{1/d}, &\text{ if } r = d+1,\\
            n^{d-k}, &\text{ otherwise}.
        \end{cases}
    \end{equation*}
    In particular, if $r = k+2$, then $f_{\mathrm{aff}}(n, d, k, k+2) = \Omega(f_{d, k}(n))$, where
    \begin{equation*}
        f_{d, k}(n) = \begin{cases}
            n/(\log n)^{1/d}, &\text{ if } k = d-1,\\
            n^{d/(k+1)}, &\text{ otherwise}.
        \end{cases}
    \end{equation*}
\end{theorem}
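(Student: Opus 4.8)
The strategy is the deletion (alteration) method, which reduces the entire theorem to a single counting estimate. First I would choose each point of $[n]^d$ independently with probability $p$ to form a random set $S$, and let $N_r$ denote the number of $r$-element subsets of $[n]^d$ whose affine hull has dimension at most $k$ (the ``bad'' subsets, i.e. those lying on a common $k$-flat). By linearity of expectation $\mathbb{E}[\abs{S}] = p\,n^d$ and the expected number of bad $r$-subsets contained in $S$ is $p^r N_r$; deleting one point from each such subset leaves a set with no $r$ points on a common $k$-dimensional affine subspace, of size at least $p\,n^d - p^r N_r$. Choosing $p \asymp (n^d/N_r)^{1/(r-1)}$ to balance the two terms yields
\begin{equation*}
f_{\mathrm{aff}}(n,d,k,r) = \Omega\!\left(\left(n^{dr}/N_r\right)^{1/(r-1)}\right).
\end{equation*}
The three regimes of the theorem, and the $r=k+2$ corollary (where $k<d-1$ falls into $r\le d$ and $k=d-1$ into $r=d+1$), all follow by inserting the values of $N_r$ established below and checking that the exponents reconcile.

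Thus everything reduces to proving $N_r = O(n^{d(k+1)})$ for $r\le d$, $N_r = O(n^{d(k+1)}\log n)$ for $r=d+1$, and $N_r = O(n^{d-k+kr})$ for $r>d+1$. To prove this I would stratify the bad $r$-subsets by the direction $W$ of their affine hull, a rational subspace of some dimension $j\le k$, and write $\Delta_W$ for the covolume of the lattice $W\cap\Z^d$. For fixed $W$ the bad subsets lie on translates $V=x+W$, and the crucial point is that a flat contributes only if $V\cap[n]^d$ actually spans $W$; this forces $W\cap\Z^d$ to contain $j$ independent vectors of length $O(n)$, so that all its successive minima are $O(n)$ and the geometry-of-numbers count gives $\abs{V\cap[n]^d}=O(n^j/\Delta_W)$ with $\Delta_W=O(n^j)$. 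Since the number of parallel flats meeting the cube is $O(n^{d-j}\Delta_W)$, summing $\binom{\abs{V\cap[n]^d}}{r}$ over flats gives a per-subspace contribution of order $n^{\,d-j+jr}\,\Delta_W^{1-r}$.

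The main number-theoretic input, and the step I expect to be the crux, is the summation over subspaces. Here I would invoke Schmidt's estimate $\#\set{W : \dim W=j,\ \Delta_W\le T}=O(T^d)$ for subspaces of bounded height, and then apply dyadic/partial summation to bound $\sum_{\Delta_W\le n^j}\Delta_W^{1-r}$, which is $O(n^{j(d-r+1)})$ when $r\le d$, $O(\log n)$ when $r=d+1$, and $O(1)$ when $r>d+1$. Feeding this back, the $j$-stratum contributes $O(n^{d(j+1)})$, $O(n^{d(j+1)}\log n)$, or $O(n^{d-j+jr})$ respectively; each is increasing in $j$, so the stratum $j=k$ (admissible since $r\ge k+2$) dominates and yields exactly the three bounds above. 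The boundary case $d-r=-1$ is precisely what produces the logarithmic factor, hence the $(\log n)^{1/d}$ in the theorem.

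The hardest technical points will be securing the uniform lattice-point estimate and justifying the reduction to non-skew subspaces: one must verify via Minkowski's second theorem that a flat whose grid points fail to span $W$ contributes only to a lower-dimensional stratum, so that skew lattices never enter the dimension-$j$ count, and one must check that the overcounting arising from subsets whose hull has dimension strictly below $k$ is absorbed into the lower strata. Once these are in place, together with an upper-bound (valid for all $T$) form of Schmidt's theorem, the optimization over $p$ and the matching of exponents in each regime are routine.
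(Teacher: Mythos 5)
Your proposal is correct and would prove the theorem, but it reaches the key counting estimate by a genuinely different route from the paper. The paper never stratifies flats by their rational direction subspace: it observes that $r$-tuples of points of $[n]^d$ on $k$-dimensional \emph{linear} subspaces are in bijection with $d\times r$ integer matrices of rank at most $k$, reduces the affine case to the linear one by translating the first point of the tuple to the origin, and then simply quotes Katznelson's theorem on integral matrices of fixed rank and bounded norm, which delivers exactly your three regimes $O(n^{d(k+1)})$, $O(n^{d(k+1)}\log n)$ and $O(n^{d+k(r-1)})$, including the logarithm at $r=d+1$; the theorem then follows from Spencer's form of the deletion lemma just as in your optimization of $p$. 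What you propose is, in effect, a self-contained reproof of the relevant case of Katznelson's theorem: the stratification over rational subspaces $W$ with $\Delta_W=O(n^j)$, the per-subspace bound $O(n^{d+j(r-1)}\Delta_W^{1-r})$, and the dyadic summation against Schmidt's count $O(T^d)$ of $j$-dimensional rational subspaces of height at most $T$ are precisely the ingredients inside Katznelson's argument, so the two proofs rest on the same number-theoretic engine. The trade-off is that the paper's write-up is a short reduction plus a citation, while yours is independent of Katznelson but must carry out the lattice-point estimates you correctly flag as the crux (all successive minima of $W\cap\Z^d$ are $O(n)$ because the tuple spans its flat; the number of nonempty parallel flats is $O(n^{d-j}\Delta_W)$ since the projected lattice $\pi_{W^\perp}(\Z^d)$ has all minima at most $1$; tuples with lower-dimensional hulls are absorbed into lower strata). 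I see no gap: your exponent bookkeeping in all three regimes, and in the $r=k+2$ specialisation, agrees with the paper's.
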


An obvious upper bound for $f_{\text{aff}}(n, d, k, r)$ is $(r-1) n^{d-k}$ as can be seen by partitioning $[n]^d$ into $n^{d-k}$ translates of $[n]^k\times\{0\}^{d-k}$. Brass and Knauer~\cite[Section~2]{BK} noted that lower bounds matching up to a constant are known in some special cases:
\begin{itemize}
    \item for $k = 1$ and $r > d$, as can be seen by considering points on the modular moment surface $x_1 + x_2^2 + \dots + x_d^d \equiv q \mod p$, and
    
    \item for $k = d-1$, as can be seen by taking points on the modular moment curve $\{t \mapsto (t, t^2, \dots, t^d) \mod p\}$,
\end{itemize}
where $p$ is a prime between $\lceil n/2 \rceil$ and $n$. A matching lower bound was also obtained by P\'or and Wood~\cite{PW} for the case $d = 3,\, k = 1,\, r = 3$ by taking a suitable positive density subset of points on the modular paraboloid $\{(x_1, x_2) \mapsto (x_1, x_2, x_1^2 + x_2^2) \mod p\}$. For $1 < k < d-1$, Brass and Knauer~\cite[Lemma~8]{BK} showed that for each $\varepsilon > 0$, there exists an integer $r \ge k + 2$ such that $f_{\text{aff}}(n, d, k, r) = \Omega(n^{d-k-\varepsilon})$. More recently, Sudakov and Tomon~\cite[Theorem~1.4]{ST} improved this result by showing $f_{\text{aff}}(n, d, k, r) = \Theta(n^{d-k})$ for $r > d^k$. Theorem~\ref{thm:affine} implies $f_{\text{aff}}(n, d, k, r) = \Theta(n^{d-k})$ for $r > d+1$, thereby improving upon Sudakov and Tomon's result.

As noted above, algebraic constructions are known to yield improved lower bounds in some special cases. Similarly, taking lattice points on a strictly convex surface yields a lower bound of $\Omega(n^{d-2+2/(d+1)})$ for no-three-in-line sets in $[n]^d$ \cite{BL} (see also \cite{Elk}). This is also the best possible bound that can be obtained using such a construction since the maximum number of points in $[n]^d$ lying on a strictly convex surface is $O(n^{d-2+2/(d+1)})$ by a result of Andrews~\cite{And}. This bound is better than the one in Theorem~\ref{thm:affine} for $k = 1,\, 3 \le r \le d/2+1$. For the remaining subcases of $k = 1$, namely, $d/2 + 1 < r \le d$, Lefmann~\cite{Lef1} (see also \cite{Lef2}) showed that it is possible to improve the bound in Theorem~\ref{thm:affine} by a polylog factor using the results of Ajtai, Koml\'os, Pintz, Spencer and Szemer\'edi~\cite{AKPSS} on independent sets in hypergraphs. For all the remaining cases, namely, $1 < k < d-1$, Theorem~\ref{thm:affine} improves upon the previously best known bound of $\Omega(n^{d-k-k(d+1)/(r-1)})$ by Lefmann~\cite{Lef1}. Finally, we remark that a non-trivial upper bound of $O(n^{\frac{d}{\lfloor (k+2)/2 \rfloor}})$ on $f_{\text{aff}}(n, d, k, k+2)$ was obtained by Lefmann~\cite{Lef1}, and was subsequently improved to $O(n^{\frac{d}{2 \lfloor (k+2)/4 \rfloor}(1 - \frac{1}{2 \lfloor (k+2)/4 \rfloor d + 1})})$ by Suk and Zeng~\cite{SZ}.

Brass and Knauer~\cite{BK} also asked the following related problem: given $n, d, k, r \in \N$ with $k < d$ and $r \ge d+1$, what is the maximum number $f_{\mathrm{lin}}(n, d, k, r)$ of points from $[n]^d$ such that any $k$-dimensional linear subspace contains at most $r-1$ of these points? For $k = 1$, this number is (up to a constant) equal to the number of primitive lattice points in $[n]^d$, which is well known to be $\Theta(n^d)$. In general, Balko et al.~\cite{BCV} showed that $f_{\mathrm{lin}}(n, d, k, r) = O(n^{d(d-k)/(d-1)})$. For $k = d-1$, B\'ar\'any et al.~\cite{BHPT} proved that $f_{\mathrm{lin}}(n, d, d-1, r) = \Theta(n^{d/(d-1)})$. Brass and Knauer~\cite[Conjecture~9]{BK} conjectured that $f_{\mathrm{lin}}(n, d, k, k+1) = \Theta(n^{d(d-k)/(d-1)})$. However, Lefmann~\cite{Lef2} refuted their conjecture for most values of $k$ and $d$ by showing that $f_{\mathrm{lin}}(n, d, k, k+1) = O(n^{d/\lceil k/2 \rceil})$. More recently, Sudakov and Tomon~\cite[Theorem~1.5]{ST} showed that $f_{\text{lin}}(n, d, k, r) = \Theta(n^{d(d-k)/(d-1)})$ for $r > d^k$. We will prove the following theorem, which recovers the result of B\'ar\'any et al.~\cite{BHPT} and extends the result of Sudakov and Tomon~\cite{ST} to a larger regime for $r$, but only up to polylog factors.

\begin{theorem}
\label{thm:linear}
    Let $d, k, r \in \N$ with $k < d$ and $r \ge k + 1$. Then $f_{\mathrm{lin}}(n, d, k, r) = \Omega(g_{d, k, r}(n))$, where
    \begin{equation*}
        g_{d, k, r}(n) = \begin{cases}
            n^{d(r-k)/(r-1)}, &\text{ if } r < d,\\
            n^{d(d-k)/(d-1)} / (\log n)^{1/(d-1)}, &\text{ otherwise}.
        \end{cases}
    \end{equation*}
    In particular, if $r = k+1$, then $f_{\mathrm{lin}}(n, d, k, k+1) = \Omega(h_{d, k}(n))$, where
    \begin{equation*}
        g_{d, k}(n) = \begin{cases}
            n^{d(d-k)/(d-1)} / (\log n)^{1/(d-1)}, &\text{ if } k = d-1,\\
            n^{d/k}, &\text{ otherwise}.
        \end{cases}
    \end{equation*}
\end{theorem}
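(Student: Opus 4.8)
The plan is to prove the lower bound by the deletion method, mirroring the argument behind Theorem~\ref{thm:affine} but with the affine incidence count replaced by its linear analogue. Write $N = N_{d,k,r}(n)$ for the number of $r$-element subsets of $[n]^d$ that lie in a common $k$-dimensional linear subspace. I would select each point of $[n]^d$ independently with probability $p$ and then delete one point from every such degenerate $r$-set that survives; the remaining set $S$ meets every $k$-dimensional linear subspace in at most $r-1$ points, and $\mathbb{E}|S| \geq p\,n^d - p^r N$. So it suffices to establish a sharp bound on $N$ and then optimise $p$.

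The heart of the matter is the counting lemma: $N = O(n^{dk})$ when $r < d$, and $N = O(n^{dk}\log n)$ when $r = d$. I would prove this by writing $N = \sum_V \binom{m_V}{r}$, where $V$ ranges over the rational $k$-dimensional linear subspaces and $m_V = |V \cap [n]^d|$. For each such $V$ the set $V \cap \Z^d$ is a primitive rank-$k$ sublattice, and geometry of numbers yields $m_V \ll \prod_{i=1}^k \max(1, n/\lambda_i(V))$ in terms of its successive minima, so that $m_V \asymp n^k / H(V)$ for the balanced subspaces, where $H(V)$ is the determinant (height) of $V \cap \Z^d$. Combining this with the classical count (Schmidt) that the number of rational $k$-dimensional subspaces of height at most $T$ is $O(T^d)$, a dyadic summation over $1 \le H(V) \le n^k$ gives $\sum_V m_V^r \ll n^{kr}\sum_T T^{\,d-r}$, which is $O(n^{dk})$ for $r < d$ and picks up a single logarithm at the borderline $r = d$. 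For $r > d$ the sum would instead be dominated by the $O(1)$ block-diagonal subspaces carrying $\sim n^k$ points each, which is precisely why no improvement is possible beyond $r = d$.

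With the counting lemma in hand the optimisation is routine. For $r < d$, balancing $p\,n^d$ against $p^r N$ gives $p \asymp n^{\,d(r-k)/(r-1)-d}$ and hence $\mathbb{E}|S| = \Omega\!\left(n^{d(r-k)/(r-1)}\right)$. For $r = d$, maximising $p\,n^d - p^d N$ with $N \asymp n^{dk}\log n$ gives the value $\asymp n^{d(d-k)/(d-1)}(\log n)^{-1/(d-1)}$, producing exactly the claimed logarithmic factor. For $r > d$ the bound follows at once from the $r = d$ case, since $f_{\mathrm{lin}}(n,d,k,r)$ is non-decreasing in $r$ (a set admissible for $r = d$ is a fortiori admissible for any larger $r$). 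Finally the specialisation $r = k+1$ lies in the first regime when $k < d-1$, giving $n^{d/k}$, and in the borderline regime $r = d$ when $k = d-1$, giving $n^{d/(d-1)}(\log n)^{-1/(d-1)}$ and recovering the theorem of B\'ar\'any et al.~\cite{BHPT}.

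The step I expect to be the main obstacle is the counting lemma, and within it the contribution of the skew subspaces — those whose successive minima are very unequal, so that $V$ holds far more lattice points than the naive $n^k/H(V)$. The clean dyadic sum above is justified only for balanced subspaces; to control the skew ones I would refine the subspace count, grouping $V$ by the full vector $(\lambda_1(V),\dots,\lambda_k(V))$ of successive minima and bounding the number of $V$ with prescribed minima by geometry of numbers (equivalently, by a point–subspace incidence estimate). Verifying that this refined sum still totals $O(n^{dk})$, uniformly over $r < d$, is the technical crux on which the whole argument rests.
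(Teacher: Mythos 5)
Your overall skeleton coincides with the paper's: both arguments reduce Theorem~\ref{thm:linear} to the counting estimate $L(n,d,k,r)=\Theta(n^{k\max\{r,d\}})$ for $r\neq d$ and $\Theta(n^{kd}\log n)$ for $r=d$ (Proposition~\ref{prop:linear-estimate}), feed it into the deletion method, and handle $r>d$ by reusing the $r=d$ count (equivalently, monotonicity of $f_{\mathrm{lin}}$ in $r$); your optimisation of $p$ reproduces the stated exponents and the $(\log n)^{1/(d-1)}$ loss correctly. Where you genuinely diverge is the proof of the counting lemma. The paper's route is a one-line reduction: an ordered $r$-tuple of points of $[n]^d$ lying in a common $k$-dimensional \emph{linear} subspace is exactly a $d\times r$ matrix with entries in $[n]$ of rank at most $k$, so the count is Katznelson's asymptotic for integral matrices of fixed rank~\cite{Kat}, which already carries the logarithm at $r=d$. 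Your route --- summing $\binom{m_V}{r}$ over rational $k$-subspaces, with $m_V$ controlled by successive minima and the number of subspaces of height about $T$ controlled by Schmidt's $O(T^d)$ bound --- is the ``from scratch'' version (essentially the method of B\'ar\'any--Harcos--Pach--Tardos~\cite{BHPT} for $k=d-1$), and your dyadic heuristic does produce the right exponents in every regime. However, the step you yourself flag is a genuine gap as written: for skew subspaces $m_V$ is not comparable to $n^k/H(V)$, and showing that the refined sum over the full tuple of successive minima still totals $O(n^{dk})$ for all $r<d$ (with only one logarithm at $r=d$) is precisely the nontrivial content that Katznelson's theorem packages for you; until that is carried out your counting lemma is unproved. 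Two smaller repairs: the sum $\sum_V\binom{m_V}{r}$ over \emph{all} rational $k$-subspaces is infinite, since an $r$-set of lower-dimensional span lies in infinitely many of them, so you must restrict to subspaces spanned by their lattice points (or count tuples by their exact span); and for the theorem only the upper bound on the count is needed, so the matching lower bound you implicitly assert is not required.
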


Erd\H{o}s and Purdy (see \cite[F3]{Guy}) asked the \emph{no-four-on-a-circle} problem: What is the maximum number $f_{\text{circ}}(n)$ of points that can be placed in an $n \times n$ integer grid so that no four points lie on a circle, where a line is also considered to be a degenerate circle? Guy~\cite{Guy} mentions that Erd\H{o}s and Purdy proved $f_{\text{circ}}(n) = \Omega(n^{2/3-o(1)})$. Thiele~\cite{Thi2} (see also \cite{Thi1}) subsequently improved this bound to $f_{\text{circ}}(n) > n/4$ by taking a suitable subset of a modular parabola. This matches (up to a constant) the obvious upper bound of $3n$ obtained by partitioning $[n]^2$ into $n$ vertical lines. By forbidding isosceles trapezia with a horizontal axis of symmetry, Thiele~\cite{Thi1} proved $f_{\text{circ}}(n) < 5n/2$. We improve on the lower bound of Thiele using a randomised construction. To do so, we determine the asymptotic count of concyclic quadruples in $[n]^2$, a quantity interesting in its own right.

\begin{theorem}
\label{thm:circle-count}
    For any $\varepsilon > 0$, the number of cyclic quadrilaterals with vertices in $[n]^2$ is $\gamma n^5 + O(n^{4+\frac{18}{29}+\varepsilon})$, where $\gamma$ is as defined in \eqref{eqn:c}.
\end{theorem}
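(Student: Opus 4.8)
The plan is to count \emph{ordered} quadruples of pairwise distinct lattice points lying on a common circle; this differs from the number of cyclic quadrilaterals only by a bounded combinatorial factor, and the degenerate quadruples (collinear ones, lying on a degenerate circle, or with repetitions) contribute only $O(n^4 \log n)$, which is absorbed into the error term. The basic tool is the concyclicity determinant: after translating the first point $P_1$ to the origin, four points $0, q_2, q_3, q_4$ are concyclic if and only if
\begin{equation*}
    G(q_2, q_3, q_4) \defeq \det \begin{pmatrix} \abs{q_2}^2 & q_{2,1} & q_{2,2} \\ \abs{q_3}^2 & q_{3,1} & q_{3,2} \\ \abs{q_4}^2 & q_{4,1} & q_{4,2} \end{pmatrix} = 0 .
\end{equation*}
Since concyclicity is translation invariant, the count becomes $\sum_{(q_2,q_3,q_4)} V(q_2,q_3,q_4)$, summed over solutions of $G=0$ with entries $O(n)$, where $V(q_2,q_3,q_4) \sim n^2$ is the number of admissible base points $P_1$ keeping all four points inside $[n]^2$. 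In this way the whole problem reduces to understanding the solution set of a single quartic Diophantine equation, which in turn is a problem about lattice points on circles through the origin.

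I would extract the main term as follows. For fixed $q_2, q_3$ the points $0, q_2, q_3$ determine a unique circle $C_0$ (collinear triples being negligible), and $q_4$ ranges over the further lattice points of $C_0$ in the box. Since three generic lattice points determine a circle carrying, on average, only $\sim 1/n$ additional lattice points, the $\sim n^4$ pairs $(q_2, q_3)$ produce $\sim n^3$ solutions of $G=0$, and hence $\sim n^5$ after weighting by $V$. Carrying this out precisely, the leading coefficient is an absolutely convergent integral/series — the constant $\gamma$ of \eqref{eqn:c} — obtained by replacing the lattice-point count on each circle by its continuous (average) prediction; the box-overlap volumes account for the powers of $n$.

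The crux is the error term, i.e.\ controlling the discrepancy between the actual number of lattice points on each circle and its average prediction. I would decompose dyadically according to the radius $R$ and the multiplicity $m_C = \abs{C \cap [n]^2}$, and treat the ``poor'' circles (few lattice points, where the continuous approximation is accurate) separately from the ``rich'' circles (many points). The rich circles are rare but dangerous: to bound their contribution I would combine the divisor-type bound $m_C = R^{o(1)}$ for a single circle with an incidence estimate between the lattice points of $[n]^2$ and circles (a Szemer\'edi--Trotter-type bound for circles), which limits how many circles can be simultaneously rich. For the bulk discrepancy on the poor circles I would use Poisson summation / a Voronoi-type expansion and bound the resulting exponential sums by van der Corput's method, exactly as in the Gauss circle problem.

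The main obstacle, and what pins down the exponent, is obtaining a genuine power saving: balancing the rich/poor split against the exponent-pair input in the exponential-sum bound, and handling the circles with anomalously many lattice points via the incidence input. Optimizing these two ranges is what produces the error $O\brax{n^{4 + \frac{18}{29} + \ve}}$, with the $\tfrac{18}{29}$ reflecting the best currently available unconditional van der Corput exponent for the relevant sums; stronger (Huxley-type or conjectural) exponential-sum bounds would push the error toward $n^{4 + 1/2 + \ve}$.
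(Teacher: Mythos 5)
Your reduction to the translated determinant equation and your order-of-magnitude count ($\sim n^3$ solutions of $G=0$, hence $\sim n^5$ after weighting by the $\sim n^2$ admissible translations) are fine, but the heart of your argument rests on a picture that is wrong in two related ways. First, the main term is \emph{not} obtained by ``replacing the lattice-point count on each circle by its continuous (average) prediction.'' For a typical pair $(q_2,q_3)$ the circle through $0,q_2,q_3$ contains no fourth lattice point at all; essentially all of the $\sim n^3$ solutions of $G=0$ come from reflection-symmetric configurations, i.e.\ from isosceles trapezia, where the fourth point is a lattice point because the axis of symmetry has rational slope $b/a$ and the points whose reflections are again lattice points form a sublattice of determinant $2(a^2+b^2)$ or $4(a^2+b^2)$ depending on parities. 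That is exactly why the constant $\gamma$ in \eqref{eqn:c} is an arithmetic sum over coprime pairs $(a,b)$ with a parity weight $3+(-1)^{a+b}$; no equidistribution or continuous heuristic produces it. The paper accordingly computes the main term by counting isosceles trapezia directly (Lemma~\ref{lem:symmetric}), via lattice-point counts in the reflected regions $P'_{\ell_c}$.

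Second, your error analysis attacks the wrong quantity with the wrong tools. What must be shown is that the \emph{asymmetric} cyclic quadrilaterals number only $O(n^{4+\frac{18}{29}+\varepsilon})$; this is Huxley--Konyagin's theorem (Lemma~\ref{lem:asymmetric}), and their exponent comes from the unique factorisation of Gaussian integers applied to the chords of the circle, not from any van der Corput or exponent-pair bound. Poisson summation controls lattice points \emph{inside} a disc (the Gauss circle problem), not the number of lattice points \emph{on} a given circle, which is a divisor-type quantity with no useful ``discrepancy from the mean'' in the van der Corput sense; likewise Szemer\'edi--Trotter-type incidence bounds for circles are far too coarse to separate the symmetric main term from the asymmetric contribution at the scale $n^{4+\frac{18}{29}}$ versus $n^5$. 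As written, your proposal would produce neither the constant $\gamma$ of \eqref{eqn:c} nor the stated error exponent; the missing ingredients are precisely the symmetric/asymmetric dichotomy and the Gaussian-integer input behind it.
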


\begin{cor}
\label{thm:circle}
    Suppose $n$ is large enough. Then there are at least $7n/12$ points in $[n]^2$ such that no four are collinear or concyclic, that is, $f_{\text{circ}}(n) \ge 7n/12$.
\end{cor}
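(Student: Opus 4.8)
The plan is to run the deletion (alteration) method on a uniformly random subset of the grid. First I would form $S\subseteq[n]^2$ by retaining each of the $n^2$ points independently with probability $p$, a parameter to be optimised at the end; then $\mathbb{E}|S|=pn^2$. Call a four-element subset of $[n]^2$ \emph{bad} if its points are collinear or concyclic, and let $B$ be the total number of bad quadruples in $[n]^2$. Since each fixed quadruple survives in $S$ with probability $p^4$, the expected number of bad quadruples inside $S$ is exactly $p^4B$. Deleting one point from each surviving bad quadruple produces a set $S'$ with no four collinear or concyclic points and $\mathbb{E}|S'|\ge pn^2-p^4B$, so some outcome attains $|S'|\ge pn^2-p^4B$.

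To evaluate $B$ I would split the bad quadruples into concyclic and collinear ones. Theorem~\ref{thm:circle-count} gives the concyclic count as $\gamma n^5+O(n^{4+18/29+\varepsilon})$. The collinear quadruples I would count separately: grouping lines by primitive direction $(a,b)$, the number of collinear quadruples in that direction equals $\sum_{P}\binom{T(P)}{3}$, where $T(P)$ is the number of further grid points reachable from the base point $P$ in direction $(a,b)$ (convexity of $[n]^2$ makes the intermediate points automatic, so only the farthest point constrains). Approximating the sum by an integral, this is $\tfrac16 J(a,b)\,n^5(1+o(1))$ for an explicit $J(a,b)$ that decays like $\max(a,b)^{-3}$, and the sum over directions converges (its tail is governed by $\sum_{s}\phi(s)/s^3=\zeta(2)/\zeta(3)$, with the axis-parallel directions contributing the dominant $n^5/24$ each). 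Hence the collinear count is $\beta_{\mathrm{lin}}n^5(1+o(1))$ for an explicit $\beta_{\mathrm{lin}}$, and writing $\beta=\gamma+\beta_{\mathrm{lin}}$ we get $B=\beta n^5(1+o(1))$.

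Finally I would optimise $p$. The bound $|S'|\ge pn^2-p^4B$ is maximised at $p=(n^2/4B)^{1/3}=(4\beta)^{-1/3}n^{-1}(1+o(1))$, which is at most $1$ once $n$ is large, and there it gives $|S'|\ge\tfrac34 pn^2=\tfrac{3}{4(4\beta)^{1/3}}\,n\,(1+o(1))$. The error term of Theorem~\ref{thm:circle-count} is harmless: weighted by $p^4=\Theta(n^{-4})$ it contributes only $O(n^{18/29+\varepsilon})=o(n)$, and likewise the lower-order collinear terms vanish in the leading coefficient. It then remains to verify that the numerically computed constant satisfies $\tfrac{3}{4(4\beta)^{1/3}}>\tfrac{7}{12}$, equivalently $\beta<729/1372$, so that $|S'|\ge 7n/12$ for all sufficiently large $n$.

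The main obstacle is precision rather than structure: since $7/12$ is an explicit numerical threshold and not merely an order of magnitude, the argument needs the exact leading coefficients. The value $\gamma$ is supplied by Theorem~\ref{thm:circle-count}, but $\beta_{\mathrm{lin}}$ requires carefully evaluating the sum over primitive directions (with the separate, dominant treatment of the axis-parallel and $\pm1$-slope directions), and the crux is confirming that $\gamma+\beta_{\mathrm{lin}}$ is small enough to clear $729/1372$. Everything else—linearity of expectation, the factor $\tfrac34$ from the optimisation, passing from the expectation to a single good outcome, and the negligibility of the error and lower-order terms—is routine.
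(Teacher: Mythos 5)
Your proposal is correct and follows essentially the same route as the paper: the paper also counts collinear quadruples exactly (Proposition~\ref{prop:line-count} gives $\beta_{\mathrm{lin}}=\tfrac{7\pi^2}{360\zeta(3)}\approx 0.1597$ unordered quadruples per $n^5$) and applies the deletion method, packaged as Lemma~\ref{lem:deletion}, which yields precisely your $\tfrac{3}{4}(4\beta)^{-1/3}n$. The one step you leave open, checking $\beta=\gamma+\beta_{\mathrm{lin}}<729/1372\approx 0.5313$, does go through, but only via the computer-assisted bound $\gamma<0.36017$ of Lemma~\ref{lem:c}, giving $\beta<0.520$.
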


The proof of Corollary~\ref{thm:circle} is computer-assisted. We use a computer program (see Lemma~\ref{lem:c}) to obtain a rigorous approximation of the constant $\gamma$ appearing in Theorem~\ref{thm:circle-count}, which implies a bound with a constant slightly larger than $7/12$ in Corollary~\ref{thm:circle}.

Thiele~\cite{Thi1} asked the following high dimensional version of the no-four-on-a-circle problem in his thesis: What is the maximum number $f_{\text{sph}}(n,d)$ of points in $[n]^d$ so that no $d+2$ points lie on a $(d-1)$-dimensional sphere, where a hyperplane is also considered to be a degenerate sphere? Thiele proved an $\Omega(n^{1/(d-1)})$ lower bound by considering a suitable subset of the modular moment curve. The same problem was also subsequently asked by Brass, Moser, and Pach in their monograph~\cite[Section~10.1, Problem~4]{BMP}. Suk and White~\cite{SW} recently improved Thiele's bound to $\Omega(n^{3/(d+1)-o(1)})$ for $d \ge 3$ using a randomised construction. We prove the following bounds on the count $S(n, d)$ of $(d+2)$-tuples of points in $[n]^d$ that lie in a $(d-1)$-dimensional sphere.

\begin{theorem}
\label{thm:sphere-count}
    There is a constant $c$ such that $n^{d^2+d-2}\lesssim S(n, d)\lesssim n^{d^2+2d - 4 + c/\log \log n} + n^{d^2+d} \log n$ for every integer $d \ge 3$.
\end{theorem}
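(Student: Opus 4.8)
The plan is to pass to the standard lifting onto the paraboloid: map each $x \in [n]^d$ to $\hat{x} = (x, |x|^2) \in \R^{d+1}$. A $(d+2)$-tuple lies on a common $(d-1)$-sphere (or, in the degenerate case, on a common hyperplane) if and only if the lifted points lie on a common hyperplane of $\R^{d+1}$. Writing $m_\sigma = |\sigma \cap [n]^d|$ for the number of our points on a sphere or hyperplane $\sigma$, the quantity to estimate becomes $S(n,d) = \sum_\sigma \binom{m_\sigma}{d+2}$, the sum running over all spheres and hyperplanes meeting $[n]^d$ in at least $d+2$ points.

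For the lower bound I would use concentric spheres. Fix a centre $c \in [n]^d$. The $n^d$ points of $[n]^d$ have squared distance to $c$ in $\{0,1,\dots,dn^2\}$, so they lie on at most $dn^2$ concentric spheres; since $\binom{\cdot}{d+2}$ is convex, the number of cospherical $(d+2)$-tuples with centre $c$ is at least $(dn^2)\binom{n^d/(dn^2)}{d+2} \gtrsim n^{d^2-2}$. A $(d+2)$-tuple in affinely general position determines its circumsphere, hence its centre, uniquely, so tuples attached to distinct centres are distinct; summing over the $n^d$ choices of $c$ gives $S(n,d) \gtrsim n^{d^2+d-2}$. (If degenerate spheres are admitted, the same bound follows at once from a single coordinate hyperplane, which carries $\binom{n^{d-1}}{d+2} \sim n^{d^2+d-2}$ tuples.)

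For the upper bound the main tool is a dyadic decomposition of $\sum_\sigma \binom{m_\sigma}{d+2}$ according to the richness $m_\sigma$, combining two estimates for $N(\ge t) := \#\{\sigma : m_\sigma \ge t\}$. The first is combinatorial: $d+1$ affinely independent points determine a unique sphere, so $\sum_\sigma \binom{m_\sigma}{d+1} \le \binom{n^d}{d+1}$, whence $N(\ge t) \lesssim n^{d^2+d}/t^{d+1}$. The second is number-theoretic: the count of lattice points on a $(d-1)$-sphere of radius at most $\sqrt{d}\,n$ is controlled by the sum-of-squares representation function $r_d$, and a divisor-type bound yields $m_\sigma \le M := n^{d-2+c/\log\log n}$; this is exactly where the factor $n^{c/\log\log n}$ originates. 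Used crudely these give $\sum_\sigma\binom{m_\sigma}{d+2} \lesssim \tfrac{M}{d+2}\binom{n^d}{d+1} \sim n^{d^2+2d-2}$, which is $n^2$ too large. The two terms in the statement should then reflect the two regimes of the decomposition: the low-richness (and degenerate hyperplane) contribution, where one loses only a logarithmic number of dyadic scales each costing $O(n^{d^2+d})$, produces the term $n^{d^2+d}\log n$ and absorbs the hyperplane total $O(n^{d^2+d-1})$, while the genuine spheres in the high-richness regime must be shown to contribute $O(n^{d^2+2d-4+c/\log\log n})$.

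The hard part is the high-richness regime, where the whole saving of $n^2$ over the crude bound lives. A clean reformulation: fixing $d$ of the points as a base $B$, the spheres through $B$ form a pencil, every other lattice point lies on a unique member, and $\binom{d+2}{2}S(n,d) = \sum_B \sum_\sigma \binom{a_\sigma}{2}$, where $a_\sigma$ counts the lattice points on $\sigma$ beyond $B$ and $\sum_\sigma a_\sigma \sim n^d$. One therefore needs the second-moment (additive-energy) bound $\sum_\sigma a_\sigma^2 \lesssim n^{2d-4+c/\log\log n}$ on average over the $\sim n^{d^2}$ bases, beating the trivial $\sum_\sigma a_\sigma^2 \le M \sum_\sigma a_\sigma \sim n^{2d-2}$ by a factor $n^2$. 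This is genuinely arithmetic: it says the lattice points cannot concentrate on too few rich spheres of the pencil, and it must be established uniformly over all $B$ and in the face of the centres of the pencil being arbitrary rationals of possibly large denominator rather than lattice or half-lattice points, which is what obstructs a direct appeal to representation-number estimates and is the crux of the argument.
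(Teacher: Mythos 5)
Your lower bound follows the paper's route (concentric spheres about each of $\Omega(n^d)$ centres, convexity of $\binom{\cdot}{d+2}$, distinctness via unique circumspheres), and is essentially correct; the one point you gloss over is that the convexity estimate counts \emph{all} $(d+2)$-tuples on the concentric spheres, including those lying on a lower-dimensional sub-sphere, which do not determine their ambient sphere and so could be shared between different centres. The paper handles this by splitting the concentric family into $\tfrac12$-nondegenerate and degenerate spheres, discarding the latter (they carry only $O(n^{d-3+c/\log\log n})$ points each by the divisor bound), and invoking a lemma showing that on a nondegenerate sphere a constant fraction of tuples have full spherical dimension. This is a fixable omission, not a structural flaw.

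The upper bound, however, has a genuine gap: you correctly isolate the required saving of $n^2$ over the crude bound $n^{d^2+2d-2}$, reformulate it as a second-moment estimate $\sum_\sigma a_\sigma^2 \lesssim n^{2d-4+c/\log\log n}$ over pencils of spheres through a $d$-point base, and then explicitly leave that estimate unproven, calling it the crux. It is the crux, and without it there is no proof. The paper obtains the saving not arithmetically but from incidence geometry: after lifting to the paraboloid, Lund's theorem bounds the number of $\alpha$-nondegenerate $r$-rich $d$-flats in $\R^{d+1}$ by $O(n^{d(d+1)}r^{-(d+2)} + n^{d^2}r^{-d})$, and the extra factor of $r^{-1}$ in the first term over the trivial $r^{-(d+1)}$ is exactly the missing $n^2$; the price is that Lund's bound applies only to nondegenerate flats, so degenerate spheres must be peeled off by a separate case analysis (recursing to a rich sub-$(d-2)$-sphere and applying Lund again one dimension down). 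Relatedly, your claim that the low-richness dyadic scales each cost $O(n^{d^2+d})$ does not follow from your stated combinatorial bound $N(\ge t)\lesssim n^{d^2+d}t^{-(d+1)}$, which only gives $O(n^{d^2+d}\,t)$ per scale and hence $O(n^{d^2+d}M)$ in total; the flat per-scale cost producing the $n^{d^2+d}\log n$ term itself already requires the stronger incidence exponent $t^{-(d+2)}$. So both terms in the claimed upper bound depend on an input your proposal does not supply.
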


As a corollary, we obtain the following improvement to Suk and White's result.

\begin{cor}
\label{thm:sphere}
    There is a constant $c$ such that $f_{\mathrm{sph}}(n,d) = \Omega\brax{n^{\frac{\min\{d,4\}}{d+1} - \frac{c}{\log \log n}}}$ for all $d \ge 3$.
\end{cor}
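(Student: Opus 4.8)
The plan is to apply the deletion (alteration) method, taking the upper bound on $S(n,d)$ from Theorem~\ref{thm:sphere-count} as the sole input. First I would form a random subset $X \subseteq [n]^d$ by retaining each lattice point independently with probability $p$, to be optimised later. Then $\mathbb{E}|X| = pn^d$, and since every $(d+2)$-tuple of distinct points survives into $X$ with probability $p^{d+2}$, the expected number $B$ of spherically degenerate $(d+2)$-tuples inside $X$ satisfies $\mathbb{E}[B] \lesssim p^{d+2} S(n,d)$. Deleting one point from each such tuple yields a subset with no $d+2$ points on a common $(d-1)$-sphere whose expected size is at least $pn^d - p^{d+2}S(n,d)$; hence some outcome attains at least this many points.

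Next I would optimise over $p$. Choosing $p = \brax{n^d/((d+2)S(n,d))}^{1/(d+1)}$ makes the deletion term a $1/(d+2)$ fraction of the main term, leaving a valid set of size $\gtrsim pn^d = n^{d(d+2)/(d+1)}/S(n,d)^{1/(d+1)}$. A quick sanity check confirms $p \le 1$ for large $n$: since $S(n,d) \gtrsim n^{d^2+d-2}$ and $d^2+d-2 \ge d$, the ratio $n^d/S(n,d)$ tends to $0$, so the chosen $p$ is legitimate.

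It then remains to insert the upper bound on $S(n,d)$. Comparing exponents, $d^2+2d-4 \ge d^2+d$ exactly when $d \ge 4$, so the two terms in Theorem~\ref{thm:sphere-count} must be handled separately. For $d \ge 4$ the first term dominates and $S(n,d) \lesssim n^{d^2+2d-4+c/\log\log n}$; substituting gives exponent $\frac{d(d+2)-(d^2+2d-4)}{d+1} = \frac{4}{d+1}$, up to the error $\frac{c}{(d+1)\log\log n}$ inherited from the $c/\log\log n$ slack in the count. For $d = 3$ the term $n^{d^2+d}\log n = n^{12}\log n$ dominates, and substituting yields $n^{3/4}/(\log n)^{1/4}$, matching $\frac{\min\{3,4\}}{d+1} = \frac34$. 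Since $\min\{d,4\}=4$ for $d\ge4$, the two cases combine into the claimed bound $\Omega\brax{n^{\min\{d,4\}/(d+1) - c/\log\log n}}$.

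Finally, a word on where the care lies: the genuinely hard work, namely the asymptotic enumeration of spherically degenerate tuples, is already packaged in Theorem~\ref{thm:sphere-count}, so within this corollary there is no real obstacle beyond bookkeeping. The only points needing attention are the case split at $d=4$ forced by the crossover of the two exponents, and checking that the polylogarithmic factor $(\log n)^{1/4}$ (for $d=3$) and the $(d+1)$-th root of the $n^{c/\log\log n}$ slack are both comfortably absorbed into the final $n^{-c/\log\log n}$ form, using that $(\log n)^{-1/4} \ge n^{-c/\log\log n}$ for all large $n$.
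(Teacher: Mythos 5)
Your argument follows the same route as the paper --- the deletion method applied to the upper bound of Theorem~\ref{thm:sphere-count}, with the case split at $d=4$ and the absorption of the logarithmic factors into $n^{-c/\log\log n}$ handled correctly --- but it has one concrete omission: you only delete tuples lying on genuine $(d-1)$-spheres. In the definition of $f_{\mathrm{sph}}(n,d)$ a hyperplane counts as a degenerate sphere, and the quantity $S(n,d)$ bounded in Theorem~\ref{thm:sphere-count} counts only $(d+2)$-tuples on actual $(d-1)$-spheres; the paper treats hyperplane tuples separately throughout. So the set your alteration produces could still contain $d+2$ points on a common hyperplane, and as written it does not witness the claimed lower bound on $f_{\mathrm{sph}}(n,d)$.

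The fix is exactly what the paper does: the number of $(d+2)$-tuples of points in $[n]^d$ lying on a hyperplane is $O(n^{d^2+d-1})$ by Proposition~\ref{prop:affine-estimate} with $k=d-1$ and $r=d+2$, and this is dominated by the $n^{d^2+d}\log n$ term already present in your edge count, so including these tuples in the hypergraph changes nothing downstream. With that term added, your optimisation of $p$, the crossover of the two exponents at $d=4$, and the observation that $(\log n)^{1/(d+1)} \le n^{c/\log\log n}$ for large $n$ all reproduce the paper's one-line deduction via Lemma~\ref{lem:deletion}, which is simply the packaged form of the alteration computation you carried out by hand.
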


Suk and White~\cite[Conjecture~1.2]{SW} conjectured that $f_{\text{sph}}(n,d) = \Omega(n^{d/(d+1)})$ for $d \ge 3$. Their results imply this bound for $d = 3$ up to a $o(1)$ term in the exponent. Similarly, Corollary~\ref{thm:sphere} implies this bound for $d = 4$ up to a $o(1)$ term in the exponent.

Note the gap between lower and upper bounds in Theorem~\ref{thm:sphere-count}. We conjecture that the true order of growth of $S(n,d)$ is close to the lower bound -- see Conjecture~\ref{conj:sphere-counting} and the related discussion in Section~\ref{sec:improvedspherecount}. Together with an estimate on the number of $(d+2)$-tuples of points lying on hyperplanes in $[n]^d$ (Proposition~\ref{prop:affine-estimate}), this would imply $f_{\text{sph}}(n) = \Omega(n)$.

\subsection{Random constructions for extremal lattice problems}
\label{sec:random}

Guy and Kelly~\cite{GK} showed that the number of collinear triples of points in $[n]^2$ is equal to $\Theta(n^4 \log n)$, which, by the deletion method, yields a set of size $\Omega(n/\sqrt{\log n})$ with no three collinear points. This bound can be improved further to $\Omega(n \sqrt{\log \log n/\log n})$ by using a result of Cooper and Mubayi~\cite{CM} about the chromatic number of $k$-uniform hypergraphs (see also \cite{Cle}). While algebraic constructions of size $\Omega(n)$ are known for the no-three-in-line problem, it is not known whether a randomised construction of this size exists. Green~\cite[Problem 72]{Gre} raised the question whether any no-$3$-in-a-line-subset of large enough size reduces to an algebraic curve modulo some prime. A construction exhibiting some weak pseudorandomness properties could give a negative answer to Green's question. See also Eppstein's blog post \cite{Epp} for a discussion of randomised constructions of no-three-in-line sets. For the related no-four-on-a-circle problem, Corollary \ref{thm:circle} shows that random constructions of size $\Omega(n)$ exist. This would also be true in higher dimensions if Conjecture \ref{conj:sphere-counting} holds.

\subsection{Methods}

All the problems we consider involve determining the largest size of an independent set $\alpha(\mathcal{H})$ of some $r$-uniform hypergraph $\mathcal{H}$, with vertices $V(\mathcal{H})=[n]^d$ and $r$ vertices forming an edge if and only if they lie on a circle, a sphere, an affine subspace or a linear subspace. The deletion method yields the following lower bound on $\alpha(\mathcal{H})$.

\begin{lemma}[Spencer ~\cite{Spe}]
\label{lem:deletion}
    For any $r$-uniform hypergraph $\mathcal{H}$, we have $\alpha(\mathcal{H})\geq \frac{r-1}{r^{\frac{r}{r-1}}}\frac{|V(\mathcal{H})|^{\frac{r}{r-1}}}{|E(\mathcal{H})|^\frac{1}{r-1}}$.
\end{lemma}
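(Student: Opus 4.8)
The plan is to use the probabilistic deletion (alteration) method. Write $N = |V(\mathcal{H})|$ and $M = |E(\mathcal{H})|$. First I would form a random subset $S \subseteq V(\mathcal{H})$ by including each vertex independently with probability $p \in [0,1]$, to be optimised later. Let $X = |S|$ count the chosen vertices and let $Y$ count the edges of $\mathcal{H}$ all of whose $r$ vertices lie in $S$. By linearity of expectation, $\mathbb{E}[X] = pN$, since each vertex survives with probability $p$, and $\mathbb{E}[Y] = p^r M$, since a fixed edge is retained precisely when all $r$ of its vertices are chosen, an event of probability $p^r$.

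Next I would delete one vertex from each edge counted by $Y$. The resulting set contains no edge of $\mathcal{H}$, so it is independent, and its size is at least $X - Y$ on every outcome. Consequently $\alpha(\mathcal{H}) \ge \mathbb{E}[X - Y] = pN - p^r M$ for every admissible $p$: indeed, some outcome realises $X - Y \ge \mathbb{E}[X-Y]$, and from that outcome we extract an independent set of at least this size. It then remains to choose $p$ to maximise the right-hand side.

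The key computation is the optimisation of $\varphi(p) = pN - p^r M$. Differentiating gives $\varphi'(p) = N - r M p^{r-1}$, so the optimum sits at $p = \brax{N/(rM)}^{1/(r-1)}$. Substituting this value and simplifying, using $p^{r-1} = N/(rM)$ to rewrite $p^r M = pN/r$ so that $\varphi(p) = pN(1 - 1/r) = \tfrac{r-1}{r}\, pN$, yields exactly $\frac{r-1}{r^{r/(r-1)}}\, N^{r/(r-1)} M^{-1/(r-1)}$, the claimed bound.

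The only point that genuinely requires care is that $p$ must be a legitimate probability, i.e.\ $p \le 1$; this holds precisely when $M \ge N/r$, which is exactly the regime in which the bound is substantive and in which all of our applications operate, since there the relevant edge counts are polynomially large. I would simply record this restriction. Outside it, when $M < N/r$, the natural choice is $p = 1$, which deletes one vertex per edge and gives the cruder bound $\alpha(\mathcal{H}) \ge N - M$; since this sparse range never occurs in our setting, no further analysis is needed. Apart from this boundary bookkeeping the argument is entirely routine — the only real work is the one-line optimisation above — so I anticipate no serious obstacle.
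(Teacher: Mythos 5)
Your proposal is correct and is exactly the standard deletion-method argument that the paper attributes to Spencer rather than proving itself: sample vertices independently with probability $p$, delete one vertex per surviving edge, and optimise $p N - p^r M$ at $p = (N/(rM))^{1/(r-1)}$. You are also right to flag the requirement $p \leq 1$, i.e.\ $M \geq N/r$: as stated the lemma is literally false in the sparse regime (for $M$ very small the right-hand side exceeds $N$), so this hypothesis is implicitly assumed, and it is harmlessly satisfied in every application in the paper since the relevant edge counts are polynomially large.
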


To prove each of our extremal results, we will solve the corresponding counting problem, that is, count the number of edges in the associated hypergraph, and then use the above lemma.

\subsubsection{Counting cyclic quadrilaterals}

The main ingredient in the proof of Theorem~\ref{thm:circle-count} is Huxley and Konyagin's~\cite{HK} result that circles passing through four lattice points are rare among the circles that pass through three lattice points. Their proof proceeds via the unique factorisation of Gaussian integers. In particular, similar approaches are unlikely to succeed in higher dimensions.

\subsubsection{Counting conspheric tuples}

The main technical ingredient behind the proof of Theorem \ref{thm:sphere-count} is a result of Lund \cite{bendlund} that bounds the number of incidences between points and $k$-dimensional affine spaces in $\R^d$. Specifically, we use the map $\Psi(x_1, \dots, x_d) \defeq (x_1, \dots, x_d, x_1^2 + \dots + x_d^2)$ to lift $(d-1)$-spheres to $d$-dimensional affine spaces and then apply Lund's result to the lifted point set. 

\subsubsection{Counting tuples with affine and linear degeneracies}

Let $A(n, d, k, r)$ and $L(n, d, k, r)$ denote the number of $r$-tuples of points in $[n]^d$ that lie in $k$-dimensional affine and linear subspaces, respectively. Note that $L(n,d,k,r)$ counts the number of $d\times r$ matrices with entries in $[n]$ of rank at most $k$. Katznelson~\cite{Kat} determined the asymptotic count of integral matrices of fixed rank as $n$ goes to infinity. As an almost immediate consequence, we obtain the correct order of growth for both $A(n,d,k,r)$ and $L(n, d, k, r)$.

\begin{prop}
\label{prop:affine-estimate}
    Let $d, k, r \in \N$ with $k < d$ and $r \ge k + 2$. Then $A(n, d, k, r) = \Theta(a_{d, k, r}(n))$, where
    \begin{equation}
    \label{eqn:adef}
        a_{d, k, r}(n) = \begin{cases}
            n^{d+k\cdot\max\{(r-1),d\}}, &\text{ if }r-1\neq d, \\
            n^{d(k+1)} \log n, &\text{ otherwise}.
        \end{cases}
    \end{equation}
\end{prop}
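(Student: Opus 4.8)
The plan is to reduce the affine count $A(n,d,k,r)$ to a count of low-rank integer matrices and then quote Katznelson's asymptotics~\cite{Kat}. The starting point is the elementary observation that $r$ points $p_1,\dots,p_r\in[n]^d$ lie in a common $k$-dimensional affine subspace precisely when the difference vectors $p_2-p_1,\dots,p_r-p_1$ span a linear subspace of dimension at most $k$, that is, when the $d\times(r-1)$ matrix having these vectors as columns has rank at most $k$. (Equivalently, one could homogenise and require the $(d+1)\times r$ matrix with columns $(p_i,1)$ to have rank at most $k+1$, but the difference formulation keeps the matrix dimensions cleanest.) Fixing the base point $p_1$ and substituting $x_i\defeq p_i-p_1$, I would first rewrite
\[
A(n,d,k,r)=\sum_{p_1\in[n]^d}\#\set{(x_2,\dots,x_r):\ x_i\in [n]^d-p_1,\ \operatorname{rank}(x_2,\dots,x_r)\le k}.
\]

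Next I would strip the inner sum of its dependence on $p_1$. Each translated box $[n]^d-p_1$ contains the origin and is contained in $[-(n-1),n-1]^d$, while for the $\Theta(n^d)$ base points whose coordinates all lie in $[n/4,3n/4]$ it contains $[-n/4,n/4]^d$. Hence the inner count is sandwiched between the rank-$\le k$ matrix counts over two origin-centred boxes of side $\Theta(n)$. Writing $M_{p,q,k}(T)$ for the number of $p\times q$ integer matrices with all entries of absolute value at most $T$ and rank at most $k$, and using that rescaling the box by a constant factor changes this count only by a constant (its leading order being a power of $T$, possibly times $\log T$), this would give
\[
A(n,d,k,r)=\Theta\brax{n^{d}\,M_{d,\,r-1,\,k}(n)}.
\]
The relevant rank regime here is $k<\min\set{d,\,r-1}$, which holds because $k<d$ by hypothesis and $r-1\ge k+1$ since $r\ge k+2$.

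It then remains to feed in Katznelson's count of integer matrices of fixed rank. For $k<\min\set{p,q}$ his result yields $M_{p,q,k}(T)=\Theta\brax{T^{k\max\set{p,q}}}$ when $p\neq q$, and $M_{p,q,k}(T)=\Theta\brax{T^{kp}\log T}$ when $p=q$, the extra logarithm in the square case coming from the harmonic divergence of the sum over the primitive null directions of the column lattice. Specialising to $p=d$, $q=r-1$ and multiplying by $n^{d}$ then produces $n^{d+k(r-1)}$ when $r-1>d$, $n^{d+kd}$ when $r-1<d$, and $n^{d+kd}\log n=n^{d(k+1)}\log n$ when $r-1=d$; in every case this is exactly $a_{d,k,r}(n)$ as in~\eqref{eqn:adef}. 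Since the genuinely hard analytic content is already packaged in Katznelson's theorem, which I am quoting, the only real work that remains is the bookkeeping in the first two steps: the main point to check is that the off-centre boxes $[n]^d-p_1$ do not alter the order of the matrix count, after which one simply tracks which of the regimes $r-1\lessgtr d$ one is in so as to recover the $\max$ and, on the diagonal $r-1=d$, the boundary logarithm.
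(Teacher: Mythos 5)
Your proposal is correct and follows essentially the same route as the paper: fix the base point, pass to the difference vectors so that the affine condition becomes a rank-at-most-$k$ condition on a $d\times(r-1)$ integer matrix, and invoke Katznelson's asymptotics (the paper's Theorem~2.1 via Proposition~1.8) with the same sandwiching of the translated boxes for the two directions of the bound. The only cosmetic difference is that for the lower bound the paper takes $p_1\in[\lfloor n/2\rfloor]^d$ and uses the positive-entry version of Katznelson's count, while you restrict $p_1$ to the middle of the cube and use the symmetric box $[-n/4,n/4]^d$; both are fine.
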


The special cases $k = 1$ and $k = d-1$ of Proposition~\ref{prop:affine-estimate} have previously appeared in the literature in differing levels of generality. See for instance \cite[Lemma~2]{BW} and \cite{GK} for $k = 1$, and \cite[Proposition~4]{BW} and \cite[Lemma~3.3]{SW} for $k = d-1$. 

\begin{prop}
\label{prop:linear-estimate}
    Let $d, k, r \in \N$ with $k < \min\{d,r\}$. Then $L(n, d, k, r) = \Theta(l_{d, k, r}(n))$, where
    \begin{equation}
    \label{defn for l}
        l_{d, k, r}(n) = \begin{cases}
            n^{k\cdot\max\{r,d\}}, &\text{ if } r \neq d,\\
            n^{kd} \log n, &\text{ otherwise}.
        \end{cases}
    \end{equation}
\end{prop}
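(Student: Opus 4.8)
The plan is to deduce the proposition from Katznelson's~\cite{Kat} asymptotic count of integer matrices of fixed rank, exactly as the paragraph preceding the statement suggests. Recall that $L(n,d,k,r)$ is the number of $d\times r$ matrices with entries in $[n]$ and rank at most $k$. Since transposition preserves both the rank and the entry constraint, $L(n,d,k,r)=L(n,r,k,d)$, so I may assume $d\le r$; the hypothesis $k<\min\{d,r\}$ then reads $k<d$, and the target quantity $l_{d,k,r}(n)$ is $n^{kr}$ when $d<r$ and $n^{kd}\log n$ when $d=r$ (both expressions being symmetric in $d,r$, as they must be).

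Writing $R_j(n)$ for the number of such matrices of rank \emph{exactly} $j$, I have $L(n,d,k,r)=\sum_{j=0}^{k}R_j(n)$. Katznelson's theorem supplies the order of each $R_j(n)$: after accounting for the (bounded) discrepancy between the positive box $[n]^{d\times r}$ and a symmetric box of integer matrices, it yields $R_j(n)=\Theta(n^{jr})$ when $d<r$ and $R_j(n)=\Theta(n^{jd}\log n)$ when $d=r$, for every $j\le k<d$. These exponents strictly increase with $j$, so the sum is dominated by its top term $R_k(n)$, giving $L(n,d,k,r)=\Theta(R_k(n))$, which is precisely $l_{d,k,r}(n)$. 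The box normalization is harmless: reflecting across the coordinate hyperplanes partitions a symmetric integer box into $2^{dr}=O(1)$ sign-reflected copies of the positive box, the zero-entry locus contributing only lower-order terms.

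The one genuinely delicate feature -- and the step I expect to be the main obstacle -- is the extra $\log n$ that appears precisely on the diagonal $d=r$. The crude lower bound $L(n,d,k,r)\gtrsim n^{kr}$ is transparent (fix rows $1,\dots,k$ freely in $[n]^r$ and let every later row duplicate row $k$, forcing rank $\le k$), but this construction is blind to the logarithm. To locate it one parametrises a rank-$k$ matrix by the $k$-dimensional rational subspace $V$ spanned by its columns together with the choice of columns among the lattice points of $V\cap[n]^d$: a subspace whose primitive sublattice $\Lambda=V\cap\Z^d$ has covolume $\asymp D$ carries $\asymp n^k/D$ lattice points of the box (up to successive-minima corrections) and hence $\asymp(n^k/D)^r$ ordered $r$-tuples. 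Summing against the count $\asymp D^{d-1}\,dD$ of primitive $k$-dimensional sublattices of $\Z^d$ of covolume near $D$ (the exponent $d$ being a theorem of Schmidt on heights of rational subspaces) gives $n^{kr}\int D^{d-1-r}\,dD$, which converges to $\Theta(n^{kr})$ once $r>d$, whereas for $r=d$ the integrand is $\asymp n^{kd}D^{-1}$ and the effective range $1\le D\lesssim n^{k}$ produces the harmonic factor $\asymp n^{kd}\log n$. This is the mechanism underlying Katznelson's estimate -- already visible in the classical identity $\#\{(a,b,c,e)\in[n]^4:\ ae=bc\}=\Theta(n^2\log n)$, which is exactly the case $d=r=2$, $k=1$ -- and both the upper bound and the sharp, log-carrying lower bound are what I would extract from it.
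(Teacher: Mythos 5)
Your overall route is the same as the paper's: identify $L(n,d,k,r)$ with the number of $d\times r$ matrices over $[n]$ of rank at most $k$ and invoke Katznelson's theorem, which is exactly Theorem~\ref{thm:Kat} here. The transposition symmetry, the row-duplication lower bound of $n^{k\max\{r,d\}}$ for $r\neq d$, and the subspace-parametrisation heuristic for the $\log n$ on the diagonal $r=d$ are all sound and are consistent with (indeed, more detailed than) what the paper writes, which simply asserts that Katznelson's argument goes through when the entries are restricted to $[n]$.

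There is, however, one step that fails as stated: your claim that the passage from the symmetric box $[-n,n]^{d\times r}$ to the positive box $[n]^{d\times r}$ is ``harmless'' because the former is partitioned into $2^{dr}$ sign-reflected copies of the latter. Flipping the sign of a \emph{single entry} of a matrix does not preserve its rank (e.g.\ the all-ones $2\times 2$ matrix has rank $1$, but negating one entry makes it invertible), so the rank-$\le k$ matrices need not be distributed comparably among the $2^{dr}$ orthants, and pigeonholing into some orthant does not transfer the count to the all-positive one. The rank-preserving reflections are only the $2^{d+r}$ row/column sign changes $A\mapsto D_1AD_2$, which do not reach every orthant. This flaw is not fatal to your proposal, because the upper bound needs only the containment $[n]^{d\times r}\subseteq[-n,n]^{d\times r}$, and your lower bounds (row duplication for $r\neq d$; the subspace/covolume computation, which you already carry out inside $[n]^d$, for $r=d$) bypass the reflection argument entirely -- this second route is essentially the content of the paper's remark that Katznelson's proof adapts to the positive box. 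But the reflection argument itself should be deleted or replaced by that observation.
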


\subsection{Organisation}
The rest of the paper is organised as follows. We prove Theorems \ref{thm:affine} and \ref{thm:linear} in Section \ref{sec:affine}, Theorem \ref{thm:sphere-count} in Section \ref{sec:sphere}, and Theorem \ref{thm:circle-count} in Section \ref{sec:circle}. Finally, we discuss some open problems and directions for future research in Section \ref{sec:conclusion}.

\begin{note}
    It recently came to our knowledge that in an independent work, Dong and Xu~\cite{DX} proved an $n - o(n)$ lower bound on $f_{\text{sph}}(n, d)$ for all $d \ge 2$ using an algebraic construction. Corollary~\ref{thm:circle} gives a lower bound of the same order for $d = 2$ but with a worse constant. However, our construction is of independent interest since it is probabilistic -- see the discussion on random constructions in Section~\ref{sec:random}. For $d \ge 3$, Corollary~\ref{thm:sphere} gives much weaker lower bounds than Dong and Xu's $\Omega(n)$ bound. Conjecturally, however, our random construction attains the $\Omega(n)$ bound.
\end{note}

\section*{Acknowledgments}

The authors thank Rob Morris, Gabriel Dahia, Jo\~ao Pedro Marciano, and Dmitrii Zakharov for helpful conversations. We also thank Marcelo Campos for bringing the paper of Katznelson~\cite{Kat} to our attention. The research leading to this work began during the Randomness and Learning on Networks (RandNET) program, IMPA, Rio de Janeiro, 29 July to 30 August 2024. We are grateful to IMPA for their hospitality. The second author's visit was funded by the RandNET project, MSCA-RISE Programme, Grant agreement ID: 101007705. The first and third authors were supported by ERC Advanced Grant 883810. The first and second authors also thank the Clarendon Fund for funding their research.

\section{Avoiding affine and linear degeneracies}
\label{sec:affine}

Let $M(n, d, k, r)$ denote the number of integral matrices $A\in [-n,n]^{d \times r}$  of rank at most $k$. The following theorem concerning the order of growth of $M(n,d,k,r)$ as $n\to \infty$ is an immediate consequence of a result of Katznelson~\cite{Kat}. It follows from ~\cite[Theorem~1]{Kat} by noting that matrices with rank exactly $k$ constitute the main term and that $B(0, n) \subseteq [-n, n]^{d \times r} \subseteq B(0, n \sqrt{dr})$, where $B(\mathbf{x}, R)$ denotes the Euclidean ball of radius $R$ centered at $\mathbf{x} \in \R^{d \times r}$.

\begin{theorem}[Katznelson~\cite{Kat}]
\label{thm:Kat}
    Let $d,k,r\in \mathbb{N}$ with $k < \min \{d, r\}$. Then, $M(n, d, k, r)=\Theta(l_{d,k,r}(n))$, where $l_{d,k,r}$ is as defined in \eqref{defn for l}.
\end{theorem}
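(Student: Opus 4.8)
The plan is to deduce this statement directly from Katznelson's original result \cite[Theorem~1]{Kat}, which supplies the order of growth of the number of $d \times r$ integer matrices of a given rank lying in a Euclidean ball of radius $R$ centred at the origin. Two elementary reductions separate that result from the quantity $M(n,d,k,r)$: passing from the ball to the cube $[-n,n]^{d\times r}$, and passing from rank \emph{at most} $k$ to rank \emph{exactly} $k$. For the latter I would stratify by exact rank, writing $N_j(n)$ for the number of matrices in $[-n,n]^{d\times r}$ of rank exactly $j$, so that $M(n,d,k,r) = \sum_{j=0}^{k} N_j(n)$, with $N_0(n) = 1$ coming from the zero matrix.

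Next I would estimate each $N_j(n)$ with $1 \le j \le k$ (note $j \le k < \min\{d,r\}$, so Katznelson's hypothesis holds) by sandwiching the cube between two balls. Since $B(0, n) \subseteq [-n,n]^{d\times r} \subseteq B(0, n\sqrt{dr})$ and the number of rank-$j$ matrices in a region is monotone under inclusion, $N_j(n)$ lies between the count of rank-$j$ matrices in $B(0,n)$ and the corresponding count in $B(0, n\sqrt{dr})$. Katznelson's theorem evaluates the rank-$j$ count in $B(0,R)$ as $\Theta(R^{j\max\{d,r\}})$ when $d \neq r$ and as $\Theta(R^{jd}\log R)$ when $d = r$. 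As the two radii $n$ and $n\sqrt{dr}$ differ only by the constant factor $\sqrt{dr}$, the inner- and outer-ball bounds have the same order of growth in $n$, and hence $N_j(n) = \Theta(l_{d,j,r}(n))$, interpreting \eqref{defn for l} with $k$ replaced by $j$.

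Finally I would sum over $j$. Since the exponent $j\max\{d,r\}$ is strictly increasing in $j$, and the logarithmic factor (when $d = r$) does not affect this comparison, the top stratum $j = k$ dominates the sum, giving $M(n,d,k,r) = \Theta(l_{d,k,r}(n))$ as stated. The genuine mathematical content lies entirely in Katznelson's count, which we invoke as a black box; the only care required is in the two reductions above. The point to watch is the boundary case $d = r$: there one must check that rescaling the radius by the constant $\sqrt{dr}$ perturbs the logarithm only additively, via $\log(n\sqrt{dr}) = \log n + O(1)$, so that both the inner- and outer-ball estimates remain $\Theta(n^{kd}\log n)$ and the $\Theta$ is preserved after passing to the cube.
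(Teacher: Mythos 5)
Your argument is correct and is exactly the paper's: the authors likewise deduce the statement from \cite[Theorem~1]{Kat} by observing that the rank-exactly-$k$ stratum dominates and that $B(0,n)\subseteq[-n,n]^{d\times r}\subseteq B(0,n\sqrt{dr})$, so rescaling the radius by a constant preserves the order of growth (including the $\log$ factor when $d=r$). You have simply written out the same two reductions in more detail.
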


It is easy to see from the proof of \cite[Theorem~1]{Kat} that the above theorem also holds if we only allow the matrices to have entries in $[n]$ instead of $[-n,n] \cap \Z$. With this observation in mind, we are ready to prove our results on counting tuples in linear and affine spaces.

\begin{proof}[Proof of Proposition~\ref{prop:linear-estimate}]
    The set of $r$-tuples of points in $[n]^d$ lying on $k$-dimensional linear spaces is in bijective correspondence with the set of $d \times r$ matrices of rank at most $k$ with each entry in $[n]$. Then, the observation after Theorem~\ref{thm:Kat} implies $L(n, d, k, r) = \Theta(l_{d, k, r}(n))$. We remark that we do not need to use the observation for the upper bound; it follows from Theorem~\ref{thm:Kat} itself.
\end{proof}

\begin{proof}[Proof of Proposition~\ref{prop:affine-estimate}]
    We have $n^d$ choices for the first point in the tuple. Translating the space so that the chosen point becomes the new origin, the number of choices for the remaining $r-1$ points in the tuple is bounded above by the number of $(r-1)$-tuples of points in $([-n, n] \cap \Z)^d$ lying on $k$-dimensional linear spaces, which in turn equals $M(n, d, k, r-1)$. Then, recalling the definition of $a_{d,k,r}(\cdot)$ from \eqref{eqn:adef}, Theorem~\ref{thm:Kat} implies
    \begin{equation*}
        A(n, d, k, r) \le n^d \cdot M(n, d, k, r-1) = O(a_{d,k,r}(n)).
    \end{equation*}
    The lower bound follows using Proposition~\ref{prop:linear-estimate} by choosing the first point from the box $[\lfloor n/2 \rfloor ]^d$, translating the space so that the chosen point becomes the new origin, and considering the $(r-1)$-tuples of points in $[\lfloor n/2 \rfloor ]^d$ lying on $k$-dimensional linear spaces in the translated space.
\end{proof}

Theorems \ref{thm:affine} and \ref{thm:linear} follow from Propositions \ref{prop:affine-estimate} and \ref{prop:linear-estimate} immediately using Lemma \ref{lem:deletion}. Note that, to prove Theorem~\ref{thm:linear} when $r > d$, we use the counting result for $r = d$ as it yields a better lower bound on $f_{\textup{lin}}(n, d, k, r)$.

\section{Avoiding spherical degeneracies}
\label{sec:sphere}

Our primary goal in this section is to prove the upper bound on $S(n, d)$ in Theorem~\ref{thm:sphere-count}, and consequently deduce Corollary \ref{thm:sphere} using it. We will also prove the claimed lower bound on $S(n,d)$ in Theorem~\ref{thm:sphere-count} at the end of the section.

\subsection{Number of lattice points on spheres}

In the following two lemmas, we prove an upper bound on the number of lattice points on ellipses and then use it to derive a bound on the number of lattice points on spheres. Although they are similar to Lemmas~3.2 and 3.3 in \cite{Shef}, we include their proofs for completeness.

\begin{lemma}
\label{lem:ellipse}
    There is a positive constant $c$ such that for any integer $d \ge 2$, every ellipse in $\R^d$ contains $O(n^{c/\log \log n})$ points from $[n]^d$.
\end{lemma}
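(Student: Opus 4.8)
The plan is to bound the number of lattice points from $[n]^d$ lying on a single ellipse by reducing the problem to the classical estimate on the number of lattice points on a circle (equivalently, the divisor-type bound on representations of an integer as a sum of two squares). First I would observe that an ellipse $E \subseteq \R^d$ is a planar curve, so it is contained in some $2$-dimensional affine plane $P \subseteq \R^d$. The lattice points of $[n]^d$ lying on $E$ therefore all lie in $P \cap \Z^d$, which is either empty, a single point, or a coset of a rank-$2$ sublattice $\Lambda = P_0 \cap \Z^d$ (where $P_0$ is the linear plane parallel to $P$). In the only nontrivial case we get a rank-$2$ lattice, and under a linear parametrisation $\Z^2 \to \Lambda$ the ellipse pulls back to another (possibly degenerate) conic in the plane $\R^2$.

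Next I would reduce an ellipse to a circle by an affine change of coordinates in the plane of $E$. Since any ellipse is the image of a circle under an invertible affine map, and an affine map sends the lattice $\Lambda$ to another rank-$2$ lattice, it suffices to bound the number of points of a fixed rank-$2$ lattice coset lying on a circle. After rescaling so the lattice becomes (a sublattice of) $\Z^2$, the number of such points is at most the number of lattice points $(x,y) \in \Z^2$ on a circle of the form $(x-a)^2 + (y-b)^2 = \rho$; clearing denominators, this is controlled by the number of representations $r_2(m)$ of an integer $m \lesssim n^2$ as a sum of two squares (after accounting for the rational shift, which only changes things by bounded factors). The standard bound is $r_2(m) = O(m^{c'/\log\log m}) = O(n^{c/\log\log n})$, which comes from $r_2(m) \le 4 d(m)$ together with the classical estimate $d(m) = m^{O(1/\log\log m)}$ on the divisor function. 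This gives the desired $O(n^{c/\log\log n})$ bound.

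The main obstacle — and the step requiring the most care — is handling the rationality and the affine/lattice transformation cleanly, so that one genuinely lands on an integer equation to which the sum-of-two-squares bound applies. An arbitrary ellipse in $\R^d$ need not have rational data, so I would argue that only ellipses whose intersection with $\Z^d$ is large are relevant, and for such ellipses one can find three lattice points on $E$ which then determine the ellipse's plane and a rational structure; alternatively, one fixes the plane $P$ via lattice points and parametrises $\Lambda \cong \Z^2$, reducing to counting integer solutions of a single quadratic equation $Q(x,y) = 0$ where $Q$ has rational (hence, after clearing denominators, integer) coefficients and defines an ellipse. The count of integer points on such an integral conic is again governed by $r_2$ up to bounded multiplicative and additive constants. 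One should take care that the implied constant $c$ is absolute (independent of $d$): this is fine because the reduction always lands in a $2$-dimensional lattice and the divisor bound $d(m) = m^{O(1/\log\log m)}$ has an absolute implied constant, with $m \le C_d n^2$ for a dimension-dependent $C_d$ that only affects lower-order terms in the exponent. I would note that this is exactly the strategy of Lemmas~3.2 and~3.3 in \cite{Shef}, which we reproduce here for completeness.
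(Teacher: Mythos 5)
Your overall architecture---reduce to a rational two-dimensional picture, land on an integral conic, and finish with a divisor-type bound---is the same as the paper's, and your route to two dimensions (parametrising the rank-$2$ lattice $P\cap\Z^d$ in the plane of the ellipse) is a legitimate alternative to the paper's coordinate projection plus induction on $d$. The genuine gap is in the key counting step: the reduction to circles and to the sum-of-two-squares function $r_2$ does not work. An invertible affine map of the plane that turns the ellipse into a circle sends $\Lambda$ to some rank-$2$ lattice, but a general rank-$2$ lattice is not \emph{similar} to $\Z^2$, so no further ``rescaling'' can place it inside a dilate of $\Z^2$ while keeping the circle a circle; if instead you normalise the lattice to $\Z^2$, the conic reverts to a general ellipse. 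Equivalently, an integral positive definite binary quadratic form $AX^2+BXY+CY^2$ is $\mathrm{GL}_2(\Z)$-equivalent to a multiple of $X^2+Y^2$ only for very special discriminants, so the number of integer points on a general integral ellipse is not ``governed by $r_2$ up to bounded multiplicative and additive constants.'' The step you need is the bound on the number of representations of an integer by an \emph{arbitrary} positive definite integral binary quadratic form (after completing the square, on the solutions of $U^2+|\Delta|V^2=M$); this is a genuine input from the theory of binary quadratic forms, and it is exactly what the paper imports from \cite[Equation~11.9]{Iwa}, yielding a bound $O(\mathbf{d}(\Delta))$ for the transformed equation $AX^2+BXY+CY^2=F'$.

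With that tool substituted for $r_2$, your argument closes: five grid points on the ellipse force the conic (in your lattice coordinates) to have integer coefficients of size $n^{O(1)}$, so the discriminant and the represented integer are $n^{O(1)}$ and the divisor bound gives $O(n^{c/\log\log n})$ with an absolute constant, uniformly in $d$. But as written, the crucial counting step is justified by a statement that is false for general ellipses, so the proof has a real hole rather than a cosmetic one.
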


\begin{proof}
    We will prove this by induction on $d$. First, we consider the case $d = 2$. Let $E$ be an ellipse in $\R^2$. We may assume that it contains at least $5$ points from $[n]^2$ because otherwise the result holds trivially. Suppose that $E$ is defined by the equation
    \begin{equation}
    \label{eqn:ellipse}
        x^2 + bxy + cy^2 + dx + ey + f = 0, \text{ where } b,c,d,e,f \in \R \text{ and } c \ne 0.
    \end{equation}
    Plugging the five points into the above equation, we get a system of five linear equations in five variables with a unique solution. Since each of these points lies in $[n]^2$, the variables $b, c, d, e, f$ have rational values with the numerators and denominators (in the reduced form) equal in absolute value to $O(n^{\alpha})$, for some constant $\alpha > 0$. Multiplying \eqref{eqn:ellipse} by the least common multiple of the denominators of $b, c, d, e, f$, we obtain
    \begin{equation*}
        Ax^2 + Bxy + Cy^2 + Dx + Ey + F = 0,
    \end{equation*}
    where $A, B, C, D, E, F \in \Z$ with their absolute values equal to $O(n^{5\alpha})$. Now the above equation can be rewritten as
    \begin{equation*}
        AX^2 + BXY + CY^2 = F',
    \end{equation*}
    for some $F' \in \Z$, where $X = \Delta x + BE - 2CD$, $Y = \Delta y + BD - 2AE$, and $\Delta = B^2 - 4AC$ is the discriminant. Finally, note that the number of integer points on the above ellipse is $O(\mathbf{d}(\Delta))$; see \cite[Equation 11.9]{Iwa} for a precise formula. Here, $\mathbf{d}(\Delta)$ denotes the number of divisors of $\Delta$, and is $O(n^{c/\log \log n})$ for some positive constant $c$, by the divisor bound~\cite[Theorem~317]{HW}.

    Now suppose $d \ge 3$. Let $E$ be an ellipse in $\R^d$. Further, let $A$ be the $2$-dimensional affine subspace spanned by $E$ and let $W$ be the $2$-dimensional linear subspace obtained by translating $A$ so that it contains the origin. Then there exists $i \in [d]$ such that the standard basis vector $\mathbf{e}_i$ is not contained in $W$. Consider the projection map $\pi_i$ that projects points in $\R^d$ onto the hyperplane $P$ defined by $x_i = 0$. Then the projection $\pi_i(E)$ of $E$ is an ellipse. Moreover, the number of points from $[n]^d$ that lie on $E$ is bounded above by the number of points on $\pi_i(E)$ that lie on $\pi_i([n]^d)$. Identifying the hyperplane $P$ with $\R^{d-1}$, the result follows from the induction assumption.
\end{proof}

\begin{lemma}
\label{lem:sphere-count}
    There is a positive constant $c$ such that for any integers $d \ge 2$ and $1 \le k \le d-1$, every $k$-sphere in $\R^d$ contains $O(n^{k-1+c/\log \log n})$ points from $[n]^d$.
\end{lemma}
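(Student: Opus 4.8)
The plan is to induct on $k$, using Lemma~\ref{lem:ellipse} as the base case and peeling off one dimension at each step by slicing $\Sigma$ with axis-parallel integer hyperplanes. For the base case $k=1$, a $1$-sphere in $\R^d$ is a circle, hence in particular an ellipse, so Lemma~\ref{lem:ellipse} gives the bound $O(n^{c/\log\log n}) = O(n^{(1-1)+c/\log\log n})$ with exactly the constant $c$ appearing there. I would carry this same $c$ unchanged through the induction, which is what lets the final statement hold with a single universal constant.

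For the inductive step, fix a $k$-sphere $\Sigma$ with $2 \le k \le d-1$, let $A$ be the $(k+1)$-dimensional affine subspace it spans, and let $W$ be the linear subspace parallel to $A$, so $\dim W = k+1 \ge 2$. Since $W \ne \{0\}$, some standard basis vector $\mathbf{e}_j$ fails to be orthogonal to $W$; equivalently, the coordinate function $x_j$ is non-constant on $A$. Then for each $t$, the level set $A_t \defeq A \cap \{x_j = t\}$ is a $k$-dimensional affine subspace, and $\Sigma \cap A_t$ -- being the intersection of a sphere with an affine subspace of one lower dimension inside the ambient $(k+1)$-dimensional space $A$ -- is either empty, a single point, or a genuine $(k-1)$-sphere lying in $A_t \subseteq \R^d$. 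This elementary fact about hyperplane sections of spheres is the only geometric input required.

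Now I would count. Every point of $\Sigma \cap [n]^d$ has integer $x_j$-coordinate in $[n]$, so it lies in one of at most $n$ slices $\Sigma \cap A_t$. By the induction hypothesis (applicable since $1 \le k-1 \le d-2 \le d-1$), each slice that is a $(k-1)$-sphere contains $O(n^{(k-1)-1+c/\log\log n}) = O(n^{k-2+c/\log\log n})$ points of $[n]^d$, while degenerate slices contribute $O(1)$; summing over the at most $n$ slices gives $O(n^{k-1+c/\log\log n})$, as desired. The step I expect to require the most care is bookkeeping the constant $c$ so that it does not compound across the $k-1 \le d-2$ inductive steps. This is in fact automatic: each step only multiplies the running count by a factor of $n$ (the number of integer slices) and leaves the $n^{c/\log\log n}$ factor, and hence the exponent's constant, untouched. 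Only the implied constant in the $O(\cdot)$ grows, and since it is allowed to depend on the fixed parameters $d$ and $k$, this causes no difficulty.
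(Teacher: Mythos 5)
Your proof is correct and follows essentially the same route as the paper's: induction on $k$ with Lemma~\ref{lem:ellipse} as the base case, choosing a coordinate direction not orthogonal to the affine span of the sphere, and slicing by the $n$ integer hyperplanes in that direction, each slice being empty, a point, or a $(k-1)$-sphere. Your extra care about the constant $c$ not compounding across induction steps is a valid (if brief) point that the paper leaves implicit.
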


\begin{proof}
    We will prove this by induction on $k$. For $k = 1$, the result follows directly from Lemma~\ref{lem:ellipse}. Now suppose $2 \le k \le d-1$ and let $S$ be a $k$-sphere in $\R^d$. Let $i \in [d]$ be such that the standard basis vector $\mathbf{e}_i$ is not orthogonal to the affine space $A$ spanned by $S$. Such a vector exists since $A$ has dimension at least $3$. Now consider the slices of $S$ obtained by intersecting with the hyperplanes $H_j$ defined by $x_i = j$ for $j \in [n]$. Each of these slices is either empty, a singleton, or a $(k-1)$-sphere, and therefore contains $O(n^{k-2+c/\log \log n})$ points by the induction assumption. Summing up over the $n$ slices, we conclude that $S$ contains $O(n^{k-1+c/\log \log n})$ points from $[n]^d$.
\end{proof}

\subsection{An incidence result}

Let $d \ge 2$ and $1 \le k \le d-1$. Further, let $\alpha\in(0,1)$. Suppose $P$ is a finite set of points in $\R^d$. A $k$-dimensional affine space $A$ is said to be $\alpha$-nondegenerate with respect to $P$ if $P \cap A$ is nonempty and at most $\alpha |P \cap A|$ points of $P$ lie on any $(k - 1)$-dimensional affine subspace of $A$. Recall that a $k$-flat refers to a $k$-dimensional affine subspace. We are now ready to state the following bound on point flat incidences due to Lund~\cite[Theorem~4]{bendlund}.

\begin{lemma}[Lund~\cite{bendlund}]
\label{lem:Lund}
    For each integer $k\geq 1$, real $\alpha\in(0,1)$ and integer $r>k$, the number of $\alpha$-nondegenerate, $r$-rich $k$-flats is $O(n^{k+1}r^{-k-2}+n^kr^{-k})$.
\end{lemma}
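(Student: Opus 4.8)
The plan is to prove the bound by induction on $k$, the dimension of the flats, using central projection from points of $P$ to lower the dimension one step at a time, with the Szemer\'edi--Trotter theorem supplying the base case. Throughout, write $N_k(P, r)$ for the number of $\alpha$-nondegenerate $r$-rich $k$-flats determined by a finite set $P \subset \R^d$, and set $n = |P|$; the target is $N_k(P,r) = O(n^{k+1} r^{-k-2} + n^k r^{-k})$, with implied constant depending only on $k$ and $\alpha$. For $k = 1$ the nondegeneracy condition is essentially vacuous and an $r$-rich line is a rich line in the usual sense, so the bound $O(n^2 r^{-3} + n r^{-1})$ is precisely the Szemer\'edi--Trotter estimate for the number of rich lines, valid in any ambient dimension (e.g.\ after a generic projection to the plane).

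For the inductive step I would fix a point $p \in P$ and apply the central projection $\pi_p$ from $p$ onto a hyperplane avoiding $p$, identified with $\R^{d-1}$. The image $\pi_p(P \setminus \{p\})$ has at most $n-1$ points, and any $k$-flat $A$ through $p$ is sent to a $(k-1)$-flat $A' = \pi_p(A)$, since the apex lies on $A$; distinct $k$-flats through $p$ have distinct images, as $\pi_p$ is a bijection between flats through $p$ and flats of the target. The set-up also transfers nondegeneracy: a $(k-2)$-flat $B' \subseteq A'$ pulls back to a $(k-1)$-flat $B \subseteq A$ through $p$, so the $\alpha$-nondegeneracy of $A$ bounds $|\pi_p(P) \cap B'| \le |P \cap B| \le \alpha|P \cap A|$. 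Thus, \emph{provided the projection does not collapse too many points of $A$}, the image $A'$ is an $\Omega(r)$-rich, $\alpha'$-nondegenerate $(k-1)$-flat for some $\alpha'$ comparable to $\alpha$. Applying the inductive hypothesis in $\R^{d-1}$ bounds the number of such images by $O(n^{k} r^{-k-1} + n^{k-1} r^{-k+1})$. Summing over the $n$ choices of $p$ and dividing by $r$ — each $r$-rich $k$-flat being counted once for each of its at least $r$ points, across the corresponding projections — gives
\[
N_k(P, r) \;\le\; \frac{1}{r}\, n\cdot O\!\left(n^{k} r^{-k-1} + n^{k-1} r^{-k+1}\right) \;=\; O\!\left(n^{k+1} r^{-k-2} + n^{k} r^{-k}\right),
\]
which is exactly the claimed bound.

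The main obstacle is the parenthetical caveat above. Central projection from $p \in P$ identifies two points $q_1, q_2 \in P \cap A$ whenever $p, q_1, q_2$ are collinear, and in the worst case all of $P \cap A$ may lie on a bounded number of lines through $p$, collapsing the $\Omega(r)$ points of $A$ down to $O(1/\alpha)$ points of $A'$ and simultaneously destroying the lower bound on $|\pi_p(P) \cap A'|$ needed for nondegeneracy. This is where the hypothesis must do real work: one has to show that for a positive fraction of apex points $p \in P \cap A$ the projection $\pi_p$ keeps $\Omega(r)$ distinct points and preserves nondegeneracy with only a controlled loss in $\alpha$ — equivalently, that the points of an $\alpha$-nondegenerate flat cannot concentrate on few lines through too many of their own points at once. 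Pinning down the surviving fraction of good apexes, and tracking the (finite) degradation of $\alpha$ across the $k$ levels so that the implied constant stays bounded, is the technical heart of the argument. Once this is in hand the exponents fall out exactly, the gain of an extra factor $r^{-1}$ over naive simplex-counting being inherited from the Szemer\'edi--Trotter bound at the base of the induction.
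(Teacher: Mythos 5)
This lemma is not proved in the paper at all: it is imported verbatim from Lund's \emph{Two theorems on point-flat incidences}, so there is no in-paper argument to compare against. Judged on its own terms, your proposal has a genuine gap, and it is exactly the one you flag yourself. The entire difficulty of the statement is concentrated in the claim that for a constant fraction of apexes $p \in P \cap A$ the central projection $\pi_p$ sends an $r$-rich $\alpha$-nondegenerate $k$-flat $A$ to an $\Omega(r)$-rich, $\alpha'$-nondegenerate $(k-1)$-flat. Nondegeneracy of $A$ only controls concentration on $(k-1)$-flats, while collapsing under $\pi_p$ is governed by lines through $p$; the hypothesis therefore guarantees no more than about $1/\alpha$ distinct image points (each line through $p$ inside $A$ lies in some $(k-1)$-flat, hence carries at most $\alpha|P\cap A|$ points, so at least $1/\alpha$ lines are needed to cover $P\cap A$, and nothing in the hypothesis prevents equality). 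Worse, even when the image retains many points, the inherited inequality $|\pi_p(P)\cap B'| \le \alpha|P\cap A|$ must be measured against $|\pi_p(P)\cap A'|$, not against $|P\cap A|$, so nondegeneracy of the image degenerates precisely when richness does. You would need a quantitative statement that bad apexes are rare, presumably extracted from Szemer\'edi--Trotter applied inside $A$, together with control of how $\alpha'$ degrades over $k$ levels of induction; none of this is supplied, and the arithmetic of the exponents, which you do verify, is the easy part.

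For orientation: Lund's actual route is different. He reduces to the codimension-one case by a generic projection of $\R^d$ onto $\R^{k+1}$, under which the relevant $k$-flats become distinct hyperplanes and both richness and nondegeneracy are preserved, and then invokes (and extends) the Elekes--T\'oth bound of $O(n^{k+1}r^{-k-2}+n^{k}r^{-k})$ for $r$-rich nondegenerate hyperplanes in $\R^{k+1}$. The fact that Elekes--T\'oth's theorem originally holds only for $\alpha$ below a dimension-dependent threshold, and that obtaining all $\alpha\in(0,1)$ required additional work in Lund's paper, is a strong indication that the ``positive fraction of good apexes'' step you defer is not a routine verification but the substantive content of the result.
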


Similar to nondegenerate affine spaces, we say that a $k$-sphere $S$ is $\alpha$-nondegenerate with respect to $P$ if $P \cap S$ is nonempty and at most $\alpha |P \cap S|$ points of $P$ lie on any $(k - 1)$-sphere contained in $S$. Now suppose $P$ is a finite set of points in $\R^d$ such that $P$ is contained in some $k$-sphere but not in any $(k-1)$-sphere, where $1 \le k \le d-1$. Then such a $k$-sphere $S$ is unique and we say that $S$ is the spherical span of $P$, denoted by $\text{Span}_{\text{sph}}(P)$. Moreover, we say that the spherical dimension of $P$ is equal to $k$. We will need the following lemma, which asserts that for a nondegenerate $k$-sphere $S$, a constant proportion of $(k+3)$-tuples in $S$ have spherical dimension $k$.

\begin{lemma}
\label{lem:nondegen}
    Let $d \ge 2$ and $1 \le k \le d-1$. Suppose $S$ is a $k$-sphere in $\R^d$ such that $|S \cap [n]^d| = m$ and $S$ is $\frac{1}{2}$-nondegenerate with respect to $[n]^d$. Then the number of $(k+3)$-tuples in $S$ with spherical dimension $k$ is at least $m^{k+3}/2^{k+3}$.
\end{lemma}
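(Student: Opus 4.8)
The plan is to transfer the problem into the $(k+1)$-dimensional affine span of $S$, where ``lying on a $(k-1)$-sphere'' becomes ``lying on a hyperplane'', and then count affinely spanning tuples greedily. First I would let $A$ denote the $(k+1)$-dimensional affine subspace spanned by $S$, identify $A\cong\R^{k+1}$, and write $Q=S\cap[n]^d$, so $|Q|=m$. The crucial geometric observation is that a $(k-1)$-sphere contained in $S$ is precisely a set of the form $S\cap H$ for a hyperplane $H$ of $A$: such a sphere affinely spans a hyperplane $H$, and since two distinct $(k-1)$-spheres in $H\cong\R^k$ cannot contain one another, it must equal $S\cap H$. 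Consequently the points of $[n]^d$ lying on a $(k-1)$-sphere of $S$ are exactly the points of $Q$ lying on a hyperplane of $A$ (hyperplanes meeting $S$ in at most one point contribute at most one point of $Q$, so are harmless). The $\tfrac12$-nondegeneracy of $S$ thus translates into the clean statement that every hyperplane of $A$ contains at most $m/2$ points of $Q$.

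Next I would observe that a $(k+3)$-tuple of points of $Q$ (which automatically lie on the $k$-sphere $S$) has spherical dimension $k$ if and only if it is contained in no $(k-1)$-sphere, which by the previous paragraph holds if and only if it is contained in no hyperplane of $A$, that is, its points affinely span $A=\R^{k+1}$. Indeed, a $(k-1)$-sphere lies in a $k$-flat, so a spanning tuple can lie on none. The lemma is therefore reduced to the purely affine statement: if no hyperplane of $\R^{k+1}$ carries more than $m/2$ of the $m$ points of $Q$, then at least $m^{k+3}/2^{k+3}$ of the ordered $(k+3)$-tuples from $Q$ affinely span $\R^{k+1}$.

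To prove this I would count a subfamily, namely the tuples whose first $k+2$ entries are affinely independent; such a tuple already spans $A$, so its final entry may be chosen freely. Building the independent $(k+2)$-tuple greedily, I choose $q_1$ in $m$ ways, and having chosen affinely independent $q_1,\dots,q_j$ (spanning a $(j-1)$-flat $F$ with $j\le k+1$) I choose $q_{j+1}\notin F$: since $F$ is a proper subspace it lies in a hyperplane, so $|Q\cap F|\le m/2$ and there remain at least $m/2$ valid choices. This yields at least $m\,(m/2)^{k+1}$ affinely independent $(k+2)$-tuples, and hence at least $m\cdot(m/2)^{k+1}\cdot m=m^{k+3}/2^{k+1}\ge m^{k+3}/2^{k+3}$ spanning $(k+3)$-tuples, as required.

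The main obstacle is the geometric dictionary of the first step: one must verify carefully that every $(k-1)$-sphere contained in $S$ is cut out by a hyperplane of $A$, and that degenerate (tangent or non-secant) hyperplanes cause no trouble, so that the nondegeneracy hypothesis really does become a uniform bound of $m/2$ points per hyperplane. Once this dictionary is established the counting is entirely routine, and in fact gives a bound slightly stronger than the stated one.
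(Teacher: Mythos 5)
Your argument is essentially the paper's: both proofs greedily build a spanning $(k+2)$-subtuple, gaining a factor $m$ for the first point and $(m/2)^{k+1}$ from the nondegeneracy hypothesis for the rest, and then append one final point; your reformulation in the affine span of $S$ (spheres $\leftrightarrow$ hyperplane sections) is a harmless repackaging of the paper's direct use of spherical spans. The one loose end is that you choose the final point ``freely'' in $m$ ways, which counts tuples with a repeated entry; the paper instead restricts to the $m-(k+2)$ remaining points and verifies $m-(k+2)\ge m/4$ using $m\ge 2(k+1)$ (itself a consequence of $\tfrac12$-nondegeneracy), which is precisely the check your factor-of-$4$ surplus needs to absorb.
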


\begin{proof}
    We use the following procedure to construct many $(k+3)$-tuples with spherical dimension $k$. Let $T = \emptyset$. In each step, we choose a point from $(S \cap [n]^d) \setminus \text{Span}_{\text{sph}}(T)$ and add it to $T$. We continue this process until $|T| = k+2$. Note that the tuple $T$ obtained after this procedure has spherical dimension $k$. There are $m$ choices for the first point that is added to $T$. Moreover, it follows from the nondegeneracy condition that there are at least $m/2$ choices for the point being added to $T$ in each subsequent step. Finally, we add one more point to this tuple from $(S \cap [n]^d) \setminus T$. The number of choices for this last point is $m - (k+2)$.
    
    Now, the $\frac{1}{2}$-nondegeneracy of $S$ implies that $m = |S \cap [n]^d| \ge 2(k+1) \ge 4$, which further implies
    \begin{equation*}
        m - (k+2) = m - (k+1) - 1 \ge m - \frac{m}{2} - \frac{m}{4} = \frac{m}{4}.
    \end{equation*}
    Multiplying the number of choices across all steps implies that the number of $(k+3)$-tuples in $S \cap [n]^d$ with spherical dimension $k$ is at least $m^{k+3}/2^{k+3}$.
\end{proof}

\subsection{An upper bound on $S(n,d)$}

We are now ready to prove the claimed upper bound on $S(n,d)$.

\begin{proof}[Proof of Theorem~\ref{thm:sphere-count} (Upper Bound)]
    Throughout the proof, we assume that $n$ is large enough and $c > 0$ is a constant for which Lemma~\ref{lem:sphere-count} holds. Let $P = [n]^d$. Further, let $\mathcal{S}$ be the set of $(d-1)$-spheres $S$ that contain at least $d+1$ points from $P$ and $S \cap P$ is not contained in any $(d-2)$-sphere. We partition $\mathcal{S}$ into $\mathcal{N}$ and $\mathcal{D}$, where $\mathcal{N}$ is the set of $\frac{1}{2}$-nondegenerate spheres in $\mathcal{S}$ and $\mathcal{D}$ is the set of $\frac{1}{2}$-degenerate spheres in $\mathcal{S}$. Let $\Psi: \R^d \rightarrow \R^{d+1}$ be the lifting map defined by
    \begin{equation*}
        \Psi(x_1, \dots, x_d) = (x_1, \dots, x_d, x_1^2 + \dots + x_d^2).
    \end{equation*}
    We apply the map $\Psi$ to the point set $P$ as well as to the spheres in $\mathcal{N}$ to obtain $\Psi(P)$ and $\Psi(\mathcal{N})$, respectively. Note that the map $\Psi$ lifts $(d-1)$-spheres to $d$-flats and also preserves nondegeneracy, namely, $d$-flats in $\Psi(\mathcal{N})$ are $\frac{1}{2}$-nondegenerate with respect to $\Psi(P)$.
    
    Let $T$ be the set of $(d+2)$-tuples in $[n]^d$ that lie on $(d-1)$-spheres. We will bound $|T|$ by partitioning $T$ into four parts based on whether the spherical span of the tuple $t$ is: (1) a $(d-2)$-sphere, (2) a nondegenerate $(d-1)$-sphere, (3) a degenerate $(d-1)$-sphere with an arbitrarily chosen rich sub-$(d-2)$-sphere also degenerate, or (4) a degenerate $(d-1)$-sphere with an arbitrarily chosen rich sub-$(d-2)$-sphere nondegenerate.
    
    We begin by bounding the first part. Let $T'$ be the set of tuples in $T$ that are not contained in any $(d-2)$-sphere. Then Lemma~\ref{lem:sphere-count} implies
    \begin{equation}
    \label{eqn:T-T'}
        |T \setminus T'| \lesssim \binom{d}{2} (n^d)^d (n^{d-3+c/\log \log n})^2 = O(n^{d(d+2) - 6 + 2c/\log \log n}),
    \end{equation}
    since each tuple in $T \setminus T'$ must be contained in a $(d-2)$-sphere.

    We now bound the second part, namely, we bound the number of tuples $t \in T'$ such that the unique $(d-1)$-sphere that $t$ determines is in $\mathcal{N}$. This is bounded above by the number of $(d+2)$-tuples in $\Psi(P)$ lying in a $d$-flat $H \in \Psi(\mathcal{N})$, which is in turn bounded above by
    \begin{equation*}
        \sum_{H \in \Psi(\mathcal{N})} |H \cap \Psi(P)|^{d+2}.
    \end{equation*}
    For each integer $j \ge 0$, let $\mathcal{H}_j$ denote the set of $d$-flats $H$ in $\Psi(\mathcal{N})$ such that $2^j \le |H \cap \Psi(P)| < 2^{j+1}$. Lemma~\ref{lem:sphere-count} implies that each $H \in \Psi(\mathcal{N})$ satisfies $|H \cap \Psi(P)| \le Cn^{d-2+c/\log \log n}$ for some constant $C > 0$. Therefore, we have
    \begin{equation}
    \label{eqn:richness-sum}
        |\{t \in T': \text{Span}_{\text{sph}}(t) \in \mathcal{N}\}| \le \sum_{H \in \Psi(\mathcal{N})} |H \cap \Psi(P)|^{d+2} \le \sum_{j= \lfloor \log_2 (d+2) \rfloor}^{\lfloor \log_2 (Cn^{d-2+c/\log \log n}) \rfloor} |\mathcal{H}_j| \cdot 2^{(j+1)(d+2)}.
    \end{equation}
    Now we use Lemma \ref{lem:Lund} to get that
    \begin{equation*}
        |\mathcal{H}_j| \lesssim |\Psi(P)|^{d+1} (2^j)^{-(d+2)} + |\Psi(P)|^d (2^j)^{-d} = n^{d(d+1)} 2^{-j(d+2)} + n^{d^2} 2^{-jd}.
    \end{equation*}
    Plugging the above estimate in \eqref{eqn:richness-sum}, we obtain
    \begin{align}
    \label{eqn:N}
        |\{t \in T': \text{Span}_{\text{sph}}(t) \in \mathcal{N}\}| &\lesssim \sum_{j = \lfloor \log_2 (d+2) \rfloor}^{\lfloor \log_2 (Cn^{d-2+c/\log \log n}) \rfloor} \brax{n^{d(d+1)} 2^{-j(d+2)} + n^{d^2} 2^{-jd}} \cdot 2^{(j+1)(d+2)} \nonumber\\
        &= 2^{d+2} \cdot \sum_{j = \lfloor \log_2 (d+2) \rfloor}^{\lfloor \log_2 (Cn^{d-2+c/\log \log n}) \rfloor} \brax{n^{d(d+1)} + n^{d^2} 2^{2j}} \nonumber\\
        &= O(n^{d(d+2)-4+2c/\log \log n} + n^{d(d+1)} \log n).
    \end{align}
    
    We now bound the number of tuples $t \in T'$ such that the unique $(d-1)$-sphere that $t$ determines is in $\mathcal{D}$. For each sphere $S \in \mathcal{D}$, let $B(S) \subset S$ be a $(d-2)$-sphere such that $|B(S) \cap [n]^d| > |S \cap [n]^d|/2$. Let $\mathcal{D}'$ be the set of spheres $S \in \mathcal{D}$ such that $B(S)$ is $\frac{1}{2}$-degenerate. Now if $d \ge 4$, then Lemma~\ref{lem:sphere-count} implies that any sphere in $\mathcal{D}'$ contains $O(n^{d-4 + c/\log \log n})$ points from $[n]^d$. And if $d = 3$, then any sphere in $\mathcal{D}'$ contains at most four points. Thus, we obtain the following bound on the third part.
    \begin{align}
    \label{eqn:D'}
        |\{t \in T': \text{Span}_{\text{sph}}(t) \in \mathcal{D}'\}| &\lesssim |\mathcal{D}'| \cdot (n^{d-4+c/\log \log n} + 1) \nonumber \\
        &\le n^{d(d+1)} (n^{d-4+c/\log \log n}+1) \nonumber \\
        &= O(n^{d(d+2)-4+c/\log \log n} + n^{d(d+1)}).
    \end{align}
    Finally, we bound the fourth part, namely, we bound the number of tuples $t \in T'$ such that the unique $(d-1)$-sphere that $t$ determines is in $\mathcal{D} \setminus \mathcal{D}'$. Let $S \in \mathcal{D} \setminus \mathcal{D}'$. Let $|B(S) \cap [n]^d| = a$ and $|(S \setminus B(S)) \cap [n]^d| = b$. Then $a > b$ by the definition of $B(S)$. Moreover, we have
    \begin{align}
    \label{eqn:ub}
        |\{t \in T': \text{Span}_{\text{sph}}(t) = S\}| &\le (d+2)! \binom{a+b}{d+2} - (d+2)! \binom{b}{d+2}\nonumber \\
        &\le \sum_{i = 1}^{d+2} \binom{d+2}{i} a^{d+2-i} b^i\nonumber \\
        &\le (d+2) \binom{d+2}{\lceil d/2 \rceil + 1} a^{d+1} b.
    \end{align}
    We define $T(S)$ as the set of $(d+2)$-tuples of points in $[n]^d$ such that the first $d+1$ points in the tuple span $B(S)$ and the last point is contained in $S \setminus B(S)$. Lemma~\ref{lem:nondegen} implies that the right-hand side of the \eqref{eqn:ub} is (up to a constant) bounded above by $|T(S)|$. For $S_1, S_2 \in \mathcal{D} \setminus \mathcal{D}'$, we have $T(S_1) \cap T(S_2) = \emptyset$ since $S$ can be recovered from any tuple in $T(S)$ by taking its span. Therefore, we have $\bigcup_{S \in \mathcal{D} \setminus \mathcal{D}'} T(S) \subseteq T''$, where $T''$ is the set of $(d+2)$-tuples of points in $[n]^d$ such that the first $d+1$ points in the tuple span a non-degenerate $(d-2)$-sphere. Thus, we obtain
    \begin{equation}
    \label{eqn:preD-D'}
        |\{t \in T': \text{Span}_{\text{sph}}(t) \in \mathcal{D} \setminus \mathcal{D}'\}| \le \sum_{S \in \mathcal{D} \setminus \mathcal{D}'} |T(S)| \le |T''| \le N \cdot n^d,
    \end{equation}
    where $N$ is the number of $(d+1)$-tuples of points in $[n]^d$ that span a non-degenerate $(d-2)$-sphere. By a similar argument as for bounding $|\{t \in T': \text{Span}_{\text{sph}}(t) \in \mathcal{N}\}|$, we have
    \begin{align*}
        N &\lesssim \sum_{j = \lfloor \log_2 (d+1) \rfloor}^{\lfloor \log_2 (C'n^{d-3+c/\log \log n}) \rfloor} \brax{n^{d^2} 2^{-j(d+1)} + n^{d(d-1)} 2^{-j(d-1)}} \cdot 2^{(j+1)(d+1)} \\
        &= 2^{d+1} \cdot \sum_{j = \lfloor \log_2 (d+1) \rfloor}^{\lfloor \log_2 (C'n^{d-3+c/\log \log n}) \rfloor} \brax{n^{d^2} + n^{d(d-1)} 2^{2j}}\\
        &= O(n^{d^2} \log n + n^{d(d+1)-6+2c/\log \log n}),
    \end{align*}
    for some constant $C' > 0$. Using the above estimate in \eqref{eqn:preD-D'}, we obtain
    \begin{equation*}
        |\{t \in T': \text{Span}_{\text{sph}}(t) \in \mathcal{D} \setminus \mathcal{D}'\}| = O(n^{d(d+2)-6+2c/\log \log n} + n^{d(d+1)} \log n).
    \end{equation*}
    Now adding \eqref{eqn:N} and \eqref{eqn:D'} to the above inequality yields
    \begin{equation*}
        |T'| = O(n^{d(d+2)-4+2c/\log \log n} + n^{d(d+1)} \log n).
    \end{equation*}
    Finally, adding \eqref{eqn:T-T'} to the above inequality yields the desired result.
\end{proof}

Note that the number of $(d+2)$-tuples of points that lie on a $d-1$-dimensional hyperplane in $[n]^d$ is $O(n^{d^2+d-1})$ by Proposition \ref{prop:affine-estimate}. Corollary \ref{thm:sphere} now follows immediately from Theorem \ref{thm:sphere-count} and the above fact using Lemma \ref{lem:deletion}.

\subsection{A lower bound on $S(n,d)$}

Here, we present the proof of the claimed lower bound on $S(n,d)$.

\begin{proof}[Proof of Theorem~\ref{thm:sphere-count} (Lower Bound)]
    Let $d \ge 3$. Fix a centre $x_0 \in [n]^d$ with each coordinate between $\lfloor n/4 \rfloor + 1$ and $\lceil 3n/4 \rceil$ (both ends inclusive). For each integer $m$ with $1 \le m \le \lfloor n/4 \rfloor^2$, let $S_m$ be the $(d-1)$-sphere centered at $x_0$ with radius $\sqrt{m}$. Let $\mathcal{S} := \{S_m: m \in [\lfloor n/4 \rfloor^2]\}$. We partition $\mathcal{S}$ into $\mathcal{N}$ and $\mathcal{D}$, where $\mathcal{N}$ is the set of $\frac{1}{2}$-nondegenerate spheres in $\mathcal{S}$. If $S \in \mathcal{D}$, then Lemma~\ref{lem:sphere-count} implies $|S \cap [n]^d| \le 2 \cdot C n^{d-3+c/\log \log n}$ for some constant $c$ and $C$. Further, note that the lattice points inside the punctured infinity ball of radius $\lfloor n/4 \rfloor/ \sqrt{d}$ centered at $x_0$ are contained inside $(\bigcup_{S \in \mathcal{S}} S) \cap [n]^d$. Therefore, $|(\bigcup_{S \in \mathcal{S}} S) \cap [n]^d| \ge (2 \lfloor n/4 \rfloor /\sqrt{d} - 2)^d - 1$, and consequently
    \begin{align}
    \label{eqn:lower}
        \sum_{S \in \mathcal{N}} |S \cap [n]^d| &\ge (2 \lfloor n/4 \rfloor /\sqrt{d} - 2)^d - 1 - \sum_{S \in \mathcal{D}} |S \cap [n]^d| \nonumber \\
        &\ge (2 \lfloor n/4 \rfloor /\sqrt{d} - 2)^d - 2C\lfloor n/4 \rfloor^2 n^{d-3+c/\log \log n} - 1.
    \end{align}
    Suppose that $n$ is sufficiently large so that the right-hand side of \eqref{eqn:lower} is bounded below by $C' n^d$ for some constant $C'$. Now Lemma~\ref{lem:nondegen} implies that the number of $(d+2)$-tuples of points in $[n]^d$ that span a sphere in $\mathcal{S}$ is bounded below by
    \begin{equation*}
        \sum_{S \in \mathcal{N}} \frac{|S \cap [n]^d|^{d+2}}{2^{d+1}} \ge \frac{|\mathcal{N}|}{2^{d+1}} \brax{\frac{\sum_{S \in \mathcal{N}} |S \cap [n]^d|}{|\mathcal{N}|}}^{d+2} \ge \frac{(C'n^d)^{d+2}}{(2n^2)^{d+1}} = \Omega(n^{d^2-2}).
    \end{equation*}
    Finally, note that there are $\lceil n/2 \rceil^d = \Omega(n^d)$ choices for the centre $x_0$. Therefore, we conclude $S(n, d) = \Omega(n^{d^2+d-2})$.
\end{proof}

\section{No-four-on-a-circle}
\label{sec:circle}

In this section, we will estimate the number of concyclic and collinear tuples of points in $[n]^2$.

\subsection{Counting collinear tuples}

Guy and Kelly~\cite{GK} determined the asymptotic count of collinear triples of points in $[n]^2$. For $r > 3$, we use a similar method to determine the asymptotic count of collinear $r$-tuples of points in $[n]^2$. Recall that, by a tuple, we mean an ordered set.

\begin{prop}
\label{prop:line-count}
    Let $r > 3$ be an integer. Then
    \begin{equation*}
        A(n, 2, 1, r) = \frac{2(r+3)}{r+1}\cdot\frac{\zeta(r-2)}{\zeta(r-1)} \cdot n^{r+1} + O(n^r \log n),
    \end{equation*}
    where $\zeta( \cdot )$ denotes the Riemann zeta function.
\end{prop}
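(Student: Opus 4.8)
The plan is to count ordered collinear $r$-tuples by first grouping them according to the line they span, and then grouping the lines by their primitive direction. Writing $L_\ell \defeq |\ell \cap [n]^2|$, every ordered collinear $r$-tuple is an injection of $[r]$ into $\ell \cap [n]^2$ for a unique line $\ell$, so
\begin{equation*}
    A(n, 2, 1, r) = r! \sum_{\ell} \binom{L_\ell}{r},
\end{equation*}
the sum being over lines meeting $[n]^2$ in at least $r$ points. Each such line has a well-defined primitive direction $\pm v$ with $v = (a,b)$ and $\gcd(|a|,|b|) = 1$; fixing a half $H$ of directions picks one representative per direction, and on a line of direction $v$ the lattice points form an arithmetic progression with common difference $v$. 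An unordered $r$-subset of such points is determined by its lowest point $P_0$ together with the consecutive gaps $g_1, \dots, g_{r-1} \ge 1$, of total spread $G = \sum_i g_i$. Since $P_0$ and $P_0 + Gv$ must both lie in $[n]^2$, there are $(n - G|a|)_+ (n - G|b|)_+$ admissible base points, and $\binom{G-1}{r-2}$ compositions of $G$ into $r-1$ positive parts, so
\begin{equation*}
    \sum_{\ell \parallel v} \binom{L_\ell}{r} = \sum_{G \ge r-1} \binom{G-1}{r-2} (n - G|a|)_+ (n - G|b|)_+ .
\end{equation*}

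Next I would extract the leading term. Replacing $\binom{G-1}{r-2}$ by $G^{r-2}/(r-2)!$ and the sum by the corresponding integral, the substitution $G = ng$ shows that for each fixed direction this equals $\frac{1}{(r-2)!} J(a,b)\, n^{r+1}$ to leading order, where
\begin{equation*}
    J(a,b) = \int_0^\infty g^{r-2} (1 - |a|g)_+ (1 - |b|g)_+ \, dg
\end{equation*}
is homogeneous of degree $-(r-1)$ in $v$. Using $r!/(r-2)! = r(r-1)$ and summing over $v \in H$ then yields $A(n,2,1,r) \sim r(r-1)\, n^{r+1} \sum_{v \in H \text{ prim}} J(v)$. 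The homogeneity is the key structural input: writing each nonzero integer vector as $dw$ with $w$ primitive gives $\sum_{v \ne 0} J(v) = \zeta(r-1) \sum_{\text{prim}} J(v)$, whence $\sum_{v \in H \text{ prim}} J(v) = \frac{1}{2\zeta(r-1)} \sum_{v \ne 0} J(v)$; this is precisely where the factor $1/\zeta(r-1)$ enters. Note also that the relevant direction series behaves like $\sum_s s \cdot s^{1-r} = \sum_s s^{2-r}$, which converges exactly when $r > 3$, matching the hypothesis.

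It then remains to evaluate $\sum_{v \ne 0} J(v)$. Writing $\phi(g) = \sum_{m \in \Z} (1 - |m|g)_+ = 1 + 2\psi(g)$ with $\psi(g) = \sum_{m \ge 1}(1 - mg)_+$, one splits the sum into an \emph{axis} contribution (one coordinate zero), which evaluates to a multiple of $\zeta(r-1)$, and an \emph{off-axis} contribution $\sum_{a,b \ge 1} J(a,b)$. I expect the off-axis part, evaluated using the $s = \max(a,b)$ structure, to produce a term $\tfrac{r+3}{r(r-1)(r+1)} \zeta(r-2)$ together with a spurious $\zeta(r-1)$ term that \emph{cancels} the axis contribution exactly, leaving
\begin{equation*}
    \sum_{v \ne 0} J(v) = \frac{4(r+3)}{r(r-1)(r+1)} \zeta(r-2).
\end{equation*}
Substituting this back gives the claimed main term $\frac{2(r+3)}{r+1} \cdot \frac{\zeta(r-2)}{\zeta(r-1)} \, n^{r+1}$. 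The emergence of $\zeta(r-2)$ (rather than $\zeta(r-1)$) hinges on this cancellation, so verifying it is the delicate algebraic point.

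The main obstacle will be the uniform error analysis. For a direction of size $s = \max(|a|,|b|)$ the spread $G$ runs only up to $\approx n/s$, so replacing the gap-sum by its integral incurs a relative error of order $s/n$, i.e.\ about $n^r s^{2-r}$ per direction; summing over the $\Theta(s)$ primitive directions with a given maximum gives $n^r \sum_s s^{3-r}$, which is $O(n^r \log n)$ at the worst case $r = 4$ and $O(n^r)$ for $r \ge 5$. Making this rigorous requires controlling the Euler–Maclaurin (or Abel-summation) error in the gap-sum uniformly over all directions, handling the short directions (where $n/s = O(1)$ and the leading-order heuristic breaks down) separately as an error, and confirming that the tail of the direction series beyond the leading term is absorbed into $O(n^r \log n)$. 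This bookkeeping, rather than any single hard estimate, is the crux of the argument.
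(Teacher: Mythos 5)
Your proposal is correct and arrives at the right constant, but it organizes the count genuinely differently from the paper. The paper fixes a reduced slope $b/a$ with $0<b<a$, computes $|\ell_c\cap[n]^2|$ explicitly as a piecewise-linear function of the intercept $c$ (three regimes), sums the $r$-th powers of these counts over $c$, and then sums over coprime pairs via $\sum_{a}\phi(a)/a^{r-1}=\zeta(r-2)/\zeta(r-1)$, treating axis-parallel and diagonal lines separately. You instead parametrize $r$-subsets on a line of primitive direction $v=(a,b)$ by base point and gap composition of the spread $G$ (using convexity of the box so that only the two extreme points need to lie in $[n]^2$), reduce to the homogeneous integral $J(v)$, and extract the $1/\zeta(r-1)$ via the identity $\sum_{v\neq 0}J(v)=\zeta(r-1)\sum_{v\ \mathrm{prim}}J(v)$ --- which is the totient identity in disguise. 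I checked your delicate step: for $a\ge b\ge 1$ one gets $J(a,b)=a^{-(r-1)}\bigl(\tfrac{1}{r(r-1)}-\tfrac{b}{a\,r(r+1)}\bigr)$, the four axis rays contribute $\tfrac{4\zeta(r-1)}{r(r-1)}$, the off-axis vectors contribute $\tfrac{4(r+3)\zeta(r-2)}{r(r-1)(r+1)}-\tfrac{4\zeta(r-1)}{r(r-1)}$, and the $\zeta(r-1)$ terms do cancel exactly, giving $\sum_{v\neq 0}J(v)=\tfrac{4(r+3)}{r(r-1)(r+1)}\zeta(r-2)$ and hence the claimed main term after multiplying by $r(r-1)/(2\zeta(r-1))$. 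Your error accounting ($n^r\sum_s s^{3-r}$, logarithmic only at $r=4$) matches the paper's, where the analogous error is $\sum_a \phi(a)a^{2-r}\cdot O(m^r)$. What each route buys: the paper's is more elementary and fully explicit but requires the case analysis of the line--square intersection length; yours avoids that case analysis and handles the coprimality more slickly, at the cost of a uniform Euler--Maclaurin argument over all directions, which you correctly identify as the remaining bookkeeping to be done.
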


\begin{proof}
    Let $m = n-1$. The number of collinear $r$-tuples of points in $[n]^2$ equals the number of collinear $r$-tuples of points in $[m]_0^2$. The number of collinear $r$-tuples on vertical and horizontal lines is equal to
    \begin{equation*}
        2n \cdot r! \cdot \binom{n}{r} = 2n^{r+1} + O(n^r).
    \end{equation*}
    Further, the number of collinear $r$-tuples on lines parallel to $x \pm y = 0$ is equal to
    \begin{equation*}
        2 \brax{2 \cdot \sum_{i = r}^{n-1} r! \cdot \binom{i}{r} + r! \cdot \binom{n}{r}} = \frac{4}{r+1} \cdot n^{r+1} + O(n^r).
    \end{equation*}
    Note that any other line passing through at least two points in $[m]_0^2$ has the form $bx \pm ay = c$, where $(a, b) = 1$, $c \in \Z$, and $a, b$ are distinct positive integers strictly smaller than $n$. We restrict our attention to $r$-tuples of points on lines of the form $bx - ay = c$ with $b < a$. Fix $a, b \in \N$ with $(a, b) = 1$ and $0 < b < a < n$. For $c \in \Z$, let $\ell_c$ denote the line $bx - ay = c$. Then the set of lines that intersect $[0,m]^2$ is $L := \{\ell_c: -am \le c \le bm\}$. Let $c \in \Z$ be such that $\ell_c \in L$. Then the number of $r$-tuples of points in $\ell_c$ is equal to
    \begin{equation}
    \label{eqn:rtups}
        |\ell_c \cap [m]_0^2|^r + O(|\ell_c \cap [m]_0^2|^{r-1}).
    \end{equation}
    The length of the projection of the line segment $\ell_c\cap [0,m]^2$ onto the $x$-axis differs from $a\cdot|\ell_c \cap [m]_0^2|$ by at most $O(1)$, and so we have that
    \begin{equation*}
        |\ell_c \cap [m]_0^2| = \begin{cases}
            \frac{c+am}{ab} + O(1), \quad & \text{if } -am \le c < m(b-a),\\
            \frac{m}{a} + O(1), \quad &\text{if } m(b-a) \le c \le 0,\\
            \frac{bm-c}{ab} + O(1), \quad &\text{otherwise}.
        \end{cases} 
    \end{equation*}
    Plugging the above estimate in \eqref{eqn:rtups} and summing over all possible values of $c$, we conclude that the total number of collinear $r$-tuples of points lying on some line in $L$ is equal to
    \begin{align*}
        2 \cdot &\sum_{c = -am}^{m(b-a)-1} \left[\brax{\frac{c+am}{ab}}^r + O\brax{\brax{\frac{c+am}{ab}}^{r-1}}\right] + \sum_{c=m(b-a)}^{0} \left[\brax{\frac{m}{a}}^r + O\brax{\brax{\frac{m}{a}}^{r-1}}\right]\\
        &= \frac{2}{r+1} \cdot \frac{b}{a^r} \cdot m^{r+1} + \frac{b}{a^{r-1}} \cdot O(m^r) + \frac{a-b}{a^r} \cdot m^{r+1} + \frac{a-b}{a^{r-1}} \cdot O(m^r)\\
        &= \frac{2}{r+1} \cdot \frac{b}{a^r} \cdot m^{r+1} + \frac{a-b}{a^r} \cdot m^{r+1} + \frac{1}{a^{r-2}} \cdot O(m^r).
    \end{align*}
    Now summing over all possible values of $a$ and $b$, we conclude that the number of collinear $r$-tuples of points in $[m]_0^2$ lying on some line with slope between $0$ and $1$, is equal to
    \begin{align*}
        \sum_{a = 2}^{m} \sum_{1 \le b < a:\; (a,b) = 1} &\left[\frac{2}{r+1} \cdot \frac{b}{a^r} \cdot m^{r+1} + \frac{a-b}{a^r} \cdot m^{r+1} + \frac{1}{a^{r-2}} \cdot O(m^r)\right]\\
        &= \sum_{a=2}^{m} \left[\brax{\frac{r+3}{r+1}} \frac{\phi(a)}{2a^{r-1}} \cdot m^{r+1} + \frac{\phi(a)}{a^{r-2}} \cdot O(m^r)\right]\\
        &= \frac{r+3}{2(r+1)} \brax{ \sum_{a=1}^{\infty} \frac{\phi(a)}{a^{r-1}} - 1} \cdot m^{r+1} + O(m^r \log m)\\
        &= \frac{r+3}{2(r+1)} \brax{ \frac{\zeta(r-2)}{\zeta(r-1)} - 1} \cdot n^{r+1} + O(n^r \log n).
    \end{align*}
    Here $\phi(\cdot)$ denotes Euler's totient function. We multiply the above count by a factor of $4$ to account for collinear $r$-tuples on lines with slope greater than $1$ as well as lines with negative slope. Finally, we add the count of collinear $r$-tuples on vertical and horizontal lines, as well as lines parallel to $x \pm y = 0$, to obtain the desired result.
\end{proof}

\subsection{Counting cyclic quadrilaterals}

We will now estimate the number of cyclic quadrilaterals with vertices in $[n]^2$. It turns out that almost all cyclic quadrilaterals with vertices in the grid $\mathbb{Z}^2$ are symmetric, that is, they are isosceles trapezia. Huxley and Konyagin~\cite{HK} have demonstrated this in the following setting. Consider equivalence classes of cyclic quadrilaterals with vertices in $\mathbb{Z}^2$ and circumradius at most $R$, under translation. They show that the number of such equivalence classes of asymmetric cyclic quadrilaterals is $O(R^{2+\frac{18}{29}+\varepsilon})$. In contrast, there are $cR^3 + O(R^2\log(R))$ equivalence classes of isosceles trapezia, where $c > 0$ is an absolute constant.

A closer inspection of Huxley and Konyagin's proof reveals that it implies a stronger result. Namely, there are $O(R^{2+\frac{18}{29}+\varepsilon})$ equivalence classes of asymmetric cyclic quadrilaterals with vertices in $\Z^2$ and diameter at most $O(R)$. This holds because their proof only uses the $O(R)$ bound on the distances between the vertices of the asymmetric cyclic quadrilaterals being counted. Accounting for the $O(R^2)$ possible translations, we obtain the following lemma. 

\begin{lemma}
\label{lem:asymmetric}
    For any set $A \subset \mathbb{Z}^2$ of diameter $R$ and $\varepsilon > 0$, there are $O(R^{4+\frac{18}{29}+\varepsilon})$ asymmetric cyclic quadrilaterals with vertices in $A$.
\end{lemma}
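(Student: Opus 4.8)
The plan is to reduce the count of asymmetric cyclic quadrilaterals with vertices in $A$ to the translation-equivalence-class count furnished by the strengthened Huxley--Konyagin estimate quoted above, paying a factor of $|A|$ to pin down the actual position of each quadrilateral.

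First I would observe that any cyclic quadrilateral with all four vertices in $A$ automatically has diameter at most $\diam(A) = R$, since every pairwise distance among its vertices is bounded by the diameter of $A$. Hence each such quadrilateral is, up to translation, one of the configurations counted by the strengthened form of Huxley and Konyagin's result, namely that the number of translation-equivalence classes of asymmetric cyclic quadrilaterals with vertices in $\Z^2$ and diameter $O(R)$ is $O(R^{2+\frac{18}{29}+\varepsilon})$.

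Next I would bound the number of quadrilaterals with vertices in $A$ that belong to a single such equivalence class. Once the class is fixed, every quadrilateral in it is obtained from a fixed representative by a translation, and that translation is determined by where any single distinguished vertex is sent; for all four vertices to lie in $A$, the distinguished vertex must in particular lie in $A$, giving at most $|A|$ choices. Since $A \subseteq \Z^2$ has diameter $R$, it is contained in a ball of radius $R$, so $|A| = O(R^2)$. (The distinction between ordered tuples and unordered quadrilaterals costs only a bounded factor, absorbed into the $O(\cdot)$.)

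Multiplying the class count by the per-class bound then yields $O(R^{2+\frac{18}{29}+\varepsilon}) \cdot O(R^2) = O(R^{4+\frac{18}{29}+\varepsilon})$, as desired. The only genuinely delicate point is the passage from ``circumradius at most $R$'' to ``diameter $O(R)$'' in the Huxley--Konyagin bound; but this is precisely the re-examination of their argument already granted in the preceding paragraph, since their proof invokes only the $O(R)$ bound on the pairwise distances. Given that, the remainder is the elementary translation count above.
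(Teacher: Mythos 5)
Your proposal is correct and follows essentially the same route as the paper: both take the strengthened Huxley--Konyagin count of $O(R^{2+\frac{18}{29}+\varepsilon})$ translation-equivalence classes of asymmetric cyclic quadrilaterals of diameter $O(R)$ (justified by noting their proof only uses the $O(R)$ distance bound) and multiply by the $O(R^2)$ possible translations placing a representative inside a diameter-$R$ set. Your explicit remark that the translation is pinned down by the image of a single vertex lying in $A$ is just a slightly more detailed phrasing of the paper's ``accounting for the $O(R^2)$ possible translations.''
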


The following lemma (see \cite[Section~3.1]{GW}) will be useful for counting isosceles trapezia. We omit its proof since it is standard, although it follows quickly from the observation that there are $O(L+1)$ lattice points within distance $O(1)$ of any line segment of length $L$.

\begin{lemma}
\label{lem:polygon-lattice}
    Let $P$ be a convex polygon in $\R^2$. Then the number of lattice points in $P$ is $\mathrm{Area}(P) + O(\mathrm{Perimeter}(P)) + O(1)$. Moreover, the number of lattice points on $\partial P$ is $O(\mathrm{Perimeter}(P))$.
\end{lemma}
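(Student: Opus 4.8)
The plan is to compare the lattice-point count with the area through a cell decomposition, controlling every error term by the number of unit cells that the boundary meets. First I would attach to each lattice point $z \in \Z^2$ the half-open unit cell $Q_z = z + [0,1)^2$; these tile $\R^2$, so that $\mathrm{Area}(P) = \sum_{z} \mathrm{Area}(P \cap Q_z)$ while $|P \cap \Z^2| = \sum_z \mathbbm{1}[z \in P]$. I would then split $\Z^2$ into three classes according to $P$: the cells with $Q_z \subseteq P$, the cells with $\overline{Q_z} \cap P = \emptyset$, and the remaining cells, which satisfy $\overline{Q_z} \cap \partial P \ne \emptyset$ (a closed square meeting $P$ but not contained in it is connected and meets both $P$ and its complement, hence meets $\partial P$). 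In the first class $z \in P$ and $\mathrm{Area}(P \cap Q_z) = 1$, so $\mathbbm{1}[z \in P] - \mathrm{Area}(P \cap Q_z)$ vanishes; in the second class both summands vanish; only the boundary-meeting cells contribute, each by at most $1$ in absolute value. This yields $\big|\,|P \cap \Z^2| - \mathrm{Area}(P)\,\big| \le \#\{z : \overline{Q_z} \cap \partial P \ne \emptyset\}$.

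The crux is then to bound the number of unit cells meeting $\partial P$ by $O(\mathrm{Perimeter}(P) + 1)$. I would traverse the closed convex curve $\partial P$ once and count entries into new cells; each entry forces a crossing of an integer horizontal or vertical line. Using convexity — the coordinate functions $x$ and $y$ are bitonic along $\partial P$, so each integer line is crossed at most twice — the number of such crossings is $O(\max_P x - \min_P x) + O(\max_P y - \min_P y) + O(1) = O(\diam(P) + 1) = O(\mathrm{Perimeter}(P) + 1)$. This is exactly the global form of the quoted observation that a segment of length $L$ carries only $O(L+1)$ lattice points (equivalently, cells) nearby, but phrased for the whole boundary so as to avoid an error proportional to the number of edges. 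Combined with the previous paragraph, this gives $|P \cap \Z^2| = \mathrm{Area}(P) + O(\mathrm{Perimeter}(P)) + O(1)$.

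For the second assertion I would observe that every lattice point on $\partial P$ is a corner of at least one cell meeting $\partial P$, and each such cell has four corners; hence the number of boundary lattice points is at most $4\,\#\{z : \overline{Q_z} \cap \partial P \ne \emptyset\} = O(\mathrm{Perimeter}(P) + 1)$, which simplifies to $O(\mathrm{Perimeter}(P))$ once $P$ is nondegenerate and its perimeter is therefore bounded below by an absolute constant.

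The step I expect to be the main obstacle is the cell-counting bound, specifically securing $O(\mathrm{Perimeter}(P)+1)$ rather than $O(\mathrm{Perimeter}(P) + \#\text{edges})$: a naïve per-edge application of the segment observation loses an additive constant for every edge, which is fatal for convex polygons with many short sides. Treating $\partial P$ as a single curve and invoking bitonicity of its coordinates removes this defect, and everything else reduces to bookkeeping.
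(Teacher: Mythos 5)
Your argument is correct. There is nothing in the paper to compare it against line by line: the authors omit the proof, citing \cite{GW} and remarking only that the lemma ``follows quickly'' from the observation that a segment of length $L$ has $O(L+1)$ lattice points within distance $O(1)$. Your cell decomposition is a complete and standard way of filling in the details, and the one place where you add genuine content beyond that one-line justification is exactly the point you flag: applying the segment observation edge by edge would give an error of $O(\mathrm{Perimeter}(P))$ plus a constant per edge, which is not controlled for convex polygons with many short sides, whereas your single traversal of $\partial P$ together with the bitonicity of the coordinate functions (each integer grid line is crossed $O(1)$ times by a convex curve) correctly yields $O(\diam(P)+1)=O(\mathrm{Perimeter}(P)+1)$ boundary cells. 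Two minor points worth making explicit: a cell whose \emph{closed} square meets $\partial P$ without the curve entering the half-open cell is one of the at most four cells adjacent to the point of contact, so this discrepancy costs only a constant factor; and, as you note, the second assertion of the lemma literally requires an additional $+O(1)$ (a tiny polygon with a single lattice vertex has one boundary lattice point but arbitrarily small perimeter), a harmless imprecision given how the lemma is applied in the paper.
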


We are now ready to count isosceles trapezia with vertices in $[n]^2$.

\begin{lemma}
\label{lem:symmetric}
    The number of isosceles trapezia with vertices in $[n]^2$ is $\gamma n^5 + O(n^4 \log n)$, where
    \begin{align}
    \label{eqn:c}
        \gamma &= \frac{4}{15} + \sum_{a = 2}^{\infty} \sum_{1 \le b < a:\; (a,b) = 1} 2(3+(-1)^{a+b}) f(a,b),\\
        f(a, b) &= \frac{20 a^6 + 25 a^5 b - 7 a^4 b^2 + 28 a^3 b^3 - 20 a^2 b^4 + 3 a b^5 - b^6}{240 a^5 (a+b)^2 (a^2 + b^2)}. \nonumber
    \end{align}
\end{lemma}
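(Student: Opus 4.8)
\textbf{Proof proposal for Lemma~\ref{lem:symmetric}.}

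The plan is to parametrise an isosceles trapezium by its axis of symmetry together with the positions of its two symmetric pairs of vertices, and then count lattice realisations of each parameter choice using Lemma~\ref{lem:polygon-lattice}. An isosceles trapezium has a unique axis of symmetry; reflecting across this axis swaps the two vertices in each symmetric pair. As with the collinear count in Proposition~\ref{prop:line-count}, I would first dispose of the degenerate axis directions (horizontal, vertical, and the diagonals $x \pm y = 0$), which contribute the explicit constant $4/15$ and an $O(n^4)$ error, and then treat a generic axis of rational slope. A reflection in $\Z^2$ across a line of slope $b/a$ with $\gcd(a,b)=1$ maps the integer lattice to itself precisely when the axis passes through appropriate points, and the lattice splits according to the parity of $a+b$: this is the source of the factor $3 + (-1)^{a+b}$, since the reflection lattice has index $1$ or $2$ depending on whether $a+b$ is odd or even. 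For each such axis direction I would count the pairs $\{P, P'\}$ of reflected lattice points inside $[n]^2$, grouped by their (signed) distance from the axis, so that a trapezium corresponds to an ordered choice of two such symmetric pairs at distinct distances.

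The core of the calculation is a two-dimensional sum over the two offsets from the axis. For a fixed axis direction given by $(a,b)$, the lattice points of $[n]^2$ symmetric about the axis at a given perpendicular displacement form a one-dimensional progression whose length varies linearly as the displacement sweeps across the strip, exactly as the quantity $|\ell_c \cap [m]_0^2|$ behaved in Proposition~\ref{prop:line-count}. Writing the number of ways to place the two symmetric chords and summing the product of their lengths over the two displacement variables produces a polynomial integral in $a$ and $b$; carrying out this double sum (replacing sums by integrals with $O(n^4)$ error controlled by Lemma~\ref{lem:polygon-lattice}) yields the rational function $f(a,b)$ times $n^5$. The specific shape of $f(a,b)$, with denominator $240\, a^5 (a+b)^2 (a^2+b^2)$, reflects the normalisation $a^2+b^2$ of the perpendicular direction, the two factors of chord length (each scaling like $1/a$ across the strip), and the Bernoulli-type constants ($1/240$, etc.) arising from summing monomials in the two offset variables.

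I would then sum $f(a,b)$ over all reduced fractions $0 < b < a$ with the parity weight $2(3 + (-1)^{a+b})$, and verify that this sum converges: since $f(a,b) = O(1/a^3)$ uniformly in $b < a$, the inner sum over $b$ contributes $O(\phi(a)/a^3) = O(1/a^2)$, so the total converges absolutely and the tail beyond $a > n$ is $O(1/n)$, consistent with an $O(n^4)$ correction once multiplied by $n^5$. The four-fold symmetry of the grid under the dihedral action (axes of positive slope, negative slope, and their reflections) together with the ordering conventions accounts for the overall constant factors, analogous to the factor of $4$ used at the end of Proposition~\ref{prop:line-count}.

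The main obstacle will be the bookkeeping in the double offset sum: one must correctly track how the chord length depends piecewise-linearly on the displacement (the three cases in the $|\ell_c \cap [m]_0^2|$ formula have analogues here), handle the constraint that the two chords lie at distinct displacements to form a genuine trapezium rather than a degenerate configuration, and ensure the parity-dependent index of the reflection lattice is applied consistently. Getting the precise coefficients in the numerator of $f(a,b)$ requires care with the limits of summation and the $O(n^4 \log n)$ accumulation of error terms across the $O(n^2)$ many $(a,b)$ pairs; the logarithm arises exactly as in Proposition~\ref{prop:line-count}, from $\sum_{a \le n} \phi(a)/a = O(\log n)$ type tails in the error estimate.
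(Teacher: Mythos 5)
Your proposal follows essentially the same route as the paper: parametrise by the axis of symmetry, handle the axis-parallel and diagonal directions separately to obtain the $4/15$, count reflection-compatible lattice pairs per rational-slope axis by applying Lemma~\ref{lem:polygon-lattice} to the intersection of the square with its reflection (with the parity of $a+b$ governing the density of the relevant sublattice, hence the factor $3+(-1)^{a+b}$), and sum over reduced $(a,b)$ with a factor of $4$ for the dihedral symmetry, using $f(a,b)=O(a^{-3})$ for convergence. The differences are only bookkeeping: the paper works in the doubled lattice $2\cdot[m]_0^2$ so that the midpoints of the parallel sides become lattice points, takes $\binom{m_\ell}{2}$ \emph{unordered} pairs per axis (your ``ordered choice at distinct distances'' would need the same factor-of-two and rectangle/degeneracy corrections, all absorbed into the $O(n^4\log n)$ error), and the relevant tail is $\sum_a \phi(a)/a^2=O(\log n)$ rather than $\sum_a\phi(a)/a$.
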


\begin{proof}
    Let $m = n-1$. The number of isosceles trapezia with vertices in $[n]^2$ is equal to the number of isosceles trapezia with vertices in $[2m]_0^2$ such that each vertex has both coordinates even. We shall count the latter. Let $ABCD$ be an isosceles trapezoid with $A, B, C, D \in 2\cdot[m]_0^2$ and $AB$ parallel to $CD$. Then the midpoints of $AB$ and $CD$ both lie in $[2m]_0^2$. Therefore, the axis of symmetry of the isosceles trapezoid contains at least two points from $[2m]_0^2$. Let $\ell$ be a line containing at least two points from $[2m]_0^2$. Let $P_\ell$ be the convex body obtained as the intersection of $[0,2m]^2$ with its reflection under $\ell$. Further, let $m_\ell$ be the number of points from $2 \cdot [m]_0^2$ on one side of $\ell$ (not contained in $\ell$) in $P_\ell$ such that the projection of the point onto $\ell$ is a lattice point itself. Then the number of isosceles trapezia with $\ell$ as its axis of symmetry equals $\binom{m_\ell}{2}$.

    Therefore, the number of isosceles trapezia with a vertical or horizontal axis of symmetry equals
    \begin{equation*}
        2 \cdot 2 \cdot \sum_{i = 3}^{n} \binom{\lfloor \frac{i-1}{2} \rfloor n}{2} = \frac{n^5}{6} + O(n^4).
    \end{equation*}
    Similarly, the number of isosceles trapezia with axis of symmetry parallel to $x \pm y = 0$ equals
    \begin{equation*}
        2 \brax{2 \cdot \sum_{i = 3}^{n-1} \binom{\frac{i(i-1)}{2}}{2} + \binom{\frac{n(n-1)}{2}}{2}} = \frac{n^5}{10} + O(n^4).
    \end{equation*}
    Now fix integers $a, b$ with $0 < b < a \le 2m$ and $(a,b) = 1$. Similar to the proof of the lower bound in Theorem~\ref{thm:sphere-count}, we shall count the number of isosceles trapezia with their axis of symmetry parallel to $bx - ay = 0$. For $c \in \Z$, let $\ell_c$ denote the line $bx - ay = c$. The set of lines parallel to $bx - ay = 0$ that intersect $[2m]_0^2$ is $L := \{\ell_c: -2am \le c \le 2bm\}$. Due to the symmetry of the square, the number of isosceles trapezia with $\ell_c$ as its axis of symmetry is equal to the number of isosceles trapezia with $\ell_{2(b-a)m - c}$ as its axis of symmetry. We now divide our analysis into two cases depending on the parity of $a$ and $b$ modulo $2$.

    \noindent \textbf{Case 1}. Suppose exactly one of $a$ and $b$ is odd. Let $c \in \Z$. Then the set of points in $(2\Z)^2$ whose projection onto $\ell_c$ is a lattice point forms a sublattice $\mathcal{L}$ with determinant $4(a^2 + b^2)$ (see Figure~\ref{fig:abc(a)}). Let $P'_{\ell_c}$ be the half of $P_{\ell_c}$ that lies below $\ell_c$.
    
    We now compute the area of $P_{\ell_c}'$, which we shall use later to estimate $m_{\ell_c}$. Let $\theta \coloneqq \tan^{-1} \brax{\frac{b}{a}}$ and $h \coloneqq -\frac{c}{a}$ denote the inclination and $y$-intercept of the line $\ell_c$, respectively. The shape of $P_{\ell_c}'$ varies with the value of $c$. There are three different regimes, which we consider below. See Figure~\ref{fig:creg} for the shapes corresponding to these regimes and for aid with the computations below.

    \begin{figure}[!t]
        \begin{subfigure}{0.3\textwidth}
            \centering
            \includegraphics[width=\textwidth]{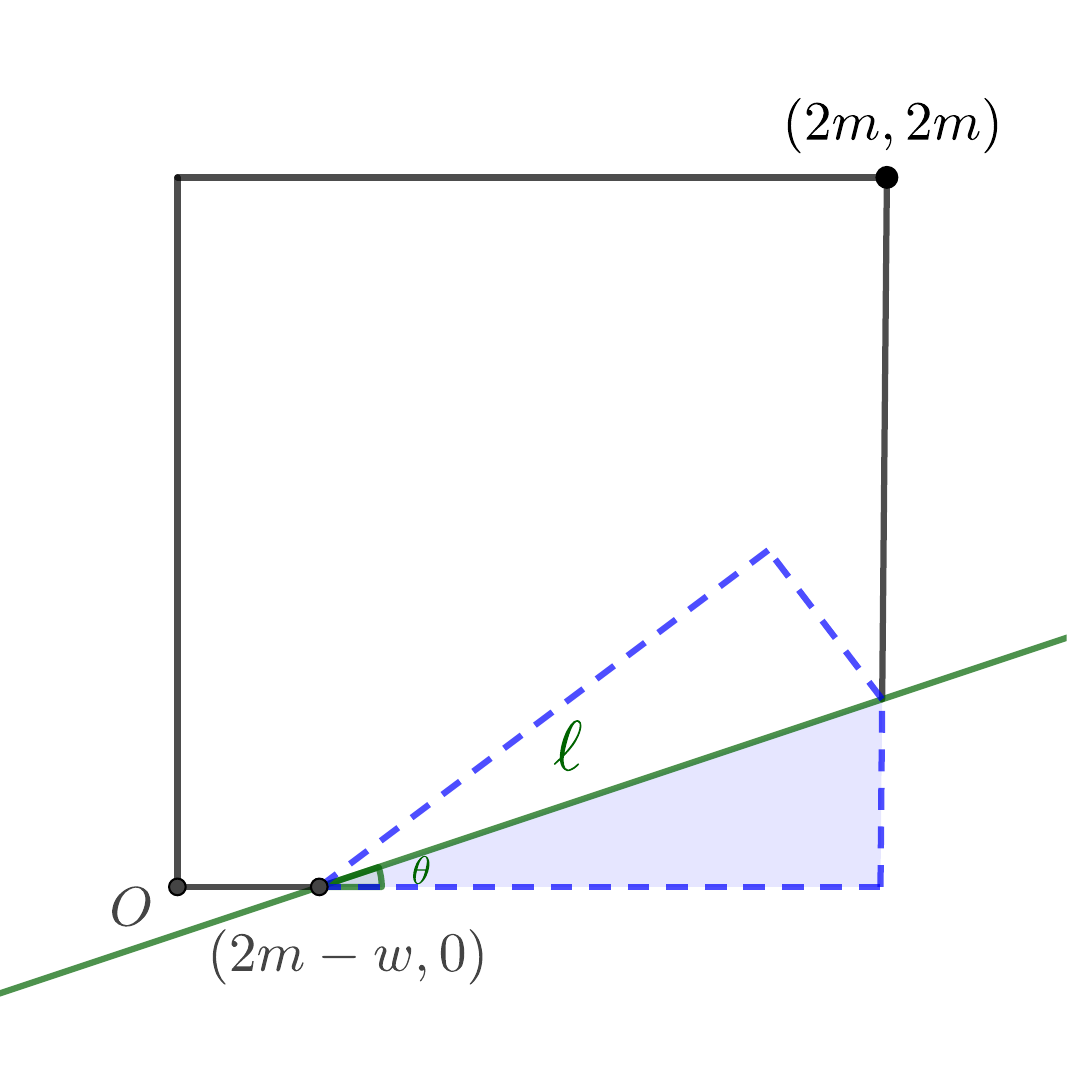}
        \end{subfigure}
        \begin{subfigure}{0.3\textwidth}
            \centering
            \includegraphics[width=\textwidth]{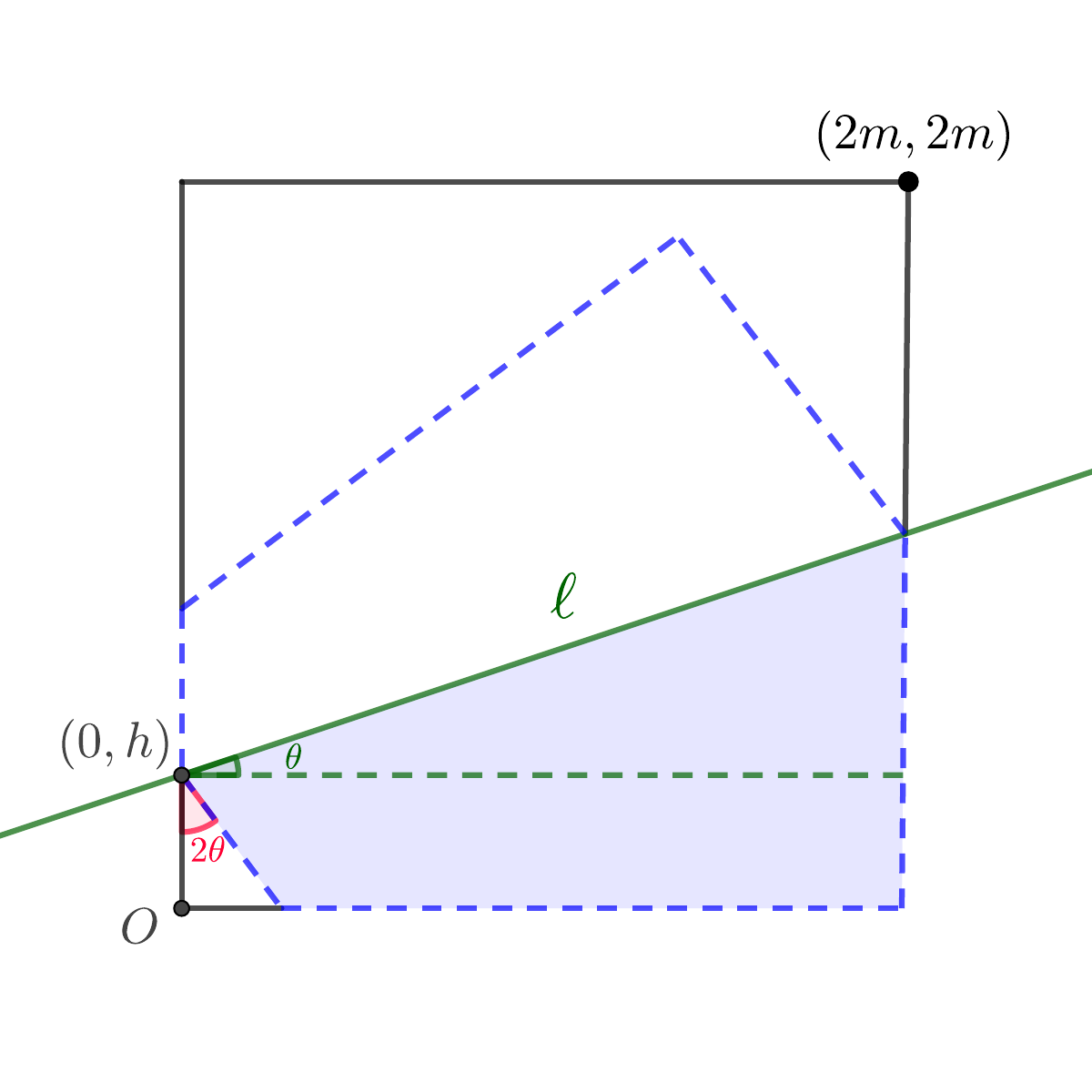}
        \end{subfigure}
        \begin{subfigure}{0.3\textwidth}
            \centering
            \includegraphics[width=\textwidth]{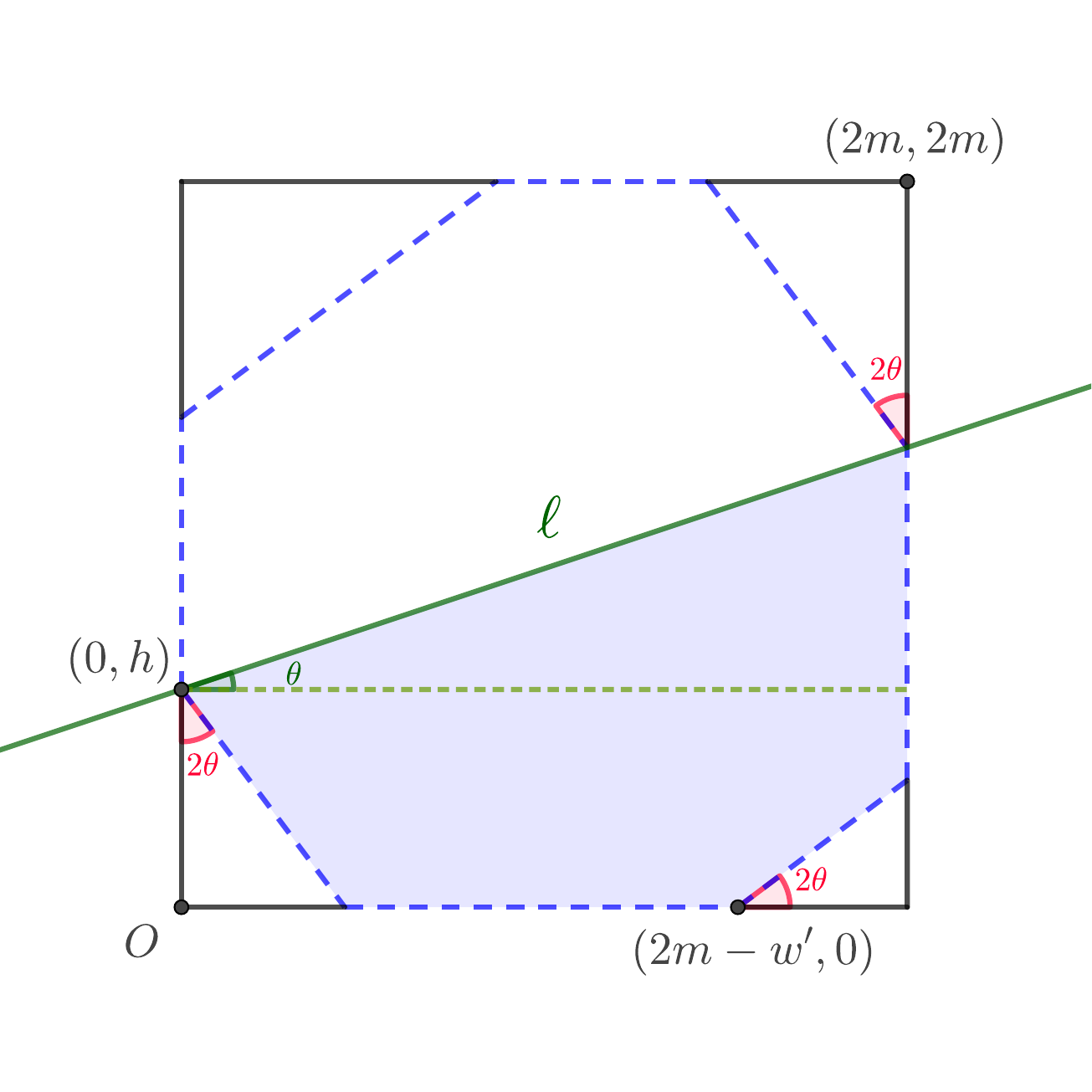}
        \end{subfigure}
        \caption{The shaded regions show $P'_{\ell_c}$ across the three different regimes for $c$.}
    \label{fig:creg}
    \end{figure}

    \noindent \textit{Case 1.1}. If $0 \le c \le 2bm$, $P'_{\ell_c}$ is a triangle with base $w = 2m - \frac{c}{b}$ and height $\frac{b}{a}w$.
    
    \noindent \textit{Case 1.2}. If $- \lfloor \frac{(a-b)^2}{a} m \rfloor \le c < 0$,  $P'_{\ell_c}$ comprises of a triangle with base $2m$ and height $\frac{b}{a}(2m)$, plus a rectangle with sides $h = -\frac{c}{a}$ and $2m$, minus a right-angled triangle with legs having lengths equal to $h$ and $h \tan(2\theta) = h \frac{2ab}{a^2-b^2}$.

    \noindent \textit{Case 1.3}. If $(b-a)m \le c < - \lfloor \frac{(a-b)^2}{a} m \rfloor$,  $P'_{\ell_c}$ comprises of a triangle with base $2m$ and height $\frac{b}{a}(2m)$, plus a rectangle with sides $h = -\frac{c}{a}$ and $2m$, minus two similar right-angled triangles with one of their acute angles equal to $2 \theta$. The first of these has a base (the leg adjacent to angle $2\theta$) of length $h$ as in Case~1.1 above, and the second has a base of length $w'$, say.
    
    In the rightmost subfigure in Figure~\ref{fig:creg}, equating the lengths of the hypotenuse of the right-angled triangle on the upper right corner and its reflection yields
    \begin{equation*}
        h + 2m \tan \theta = w' \tan(2\theta) + \brax{2m (1 - \tan \theta) - h} \sec(2\theta).
    \end{equation*}
    Plugging in $\tan \theta = \frac{b}{a}$ and simplifying, we obtain $w' = \frac{a}{b}h-\frac{(a-b)^2}{ab}m$.
    
    Combining the computations in the three regimes, we have
    \begin{equation}
    \label{eqn:area}
        \text{Area}(P'_{\ell_c}) = \begin{cases}
            \frac{b}{2a} w^2, \; &\text{if } 0 \le c \le 2bm,\\
            \frac{2b}{a}m^2 + 2mh - \frac{ab}{a^2-b^2} \cdot h^2, \; &\text{if } - \left\lfloor \frac{(a-b)^2}{a} m \right\rfloor \le c < 0,\\
            \frac{2b}{a}m^2 + 2mh - \frac{ab}{a^2-b^2} \brax{h^2 + w'^2}, \; &\text{if } (b-a)m \le c < - \left\lfloor \frac{(a-b)^2}{a} m \right\rfloor,\\
        \end{cases}
    \end{equation}
    where $w = 2m - c/b$, $h = -c/a$, and $w'=\frac{a}{b}h-\frac{(a-b)^2}{ab}m$.  
    
    Further, we have $\text{Perimeter}(P'_{\ell_c}) = O(m)$. Note that one can choose the fundamental parallelogram of $\mathcal{L}$ to be a square with side length $2\sqrt{a^2 + b^2}$. Therefore, we may dilate $\R^2$ by a factor of $(2 \sqrt{a^2 + b^2})^{-1}$ and apply Lemma~\ref{lem:polygon-lattice} to the image of $P'_{\ell_c}$ to obtain
    \begin{equation*}
        m_{\ell_c} = \frac{\text{Area}(P'_{\ell_c})}{4(a^2+b^2)} + O\brax{\frac{\text{Perimeter}(P'_{\ell_c})}{\sqrt{a^2 + b^2}}} + O(1) = \frac{\text{Area}(P'_{\ell_c})}{4(a^2+b^2)} + O\brax{\frac{m}{a}}.
    \end{equation*}
    Summing over all possible values of $c$, we conclude that the total number of isosceles trapezia with vertices in $2 \cdot [m]_0^2$ and their axis of symmetry in $L$, is equal to
    \begin{align*}
        \sum_{c=-2am}^{2bm} \binom{m_{\ell_c}}{2} &= 2 \cdot \sum_{c=(b-a)m+1}^{2bm} \left[ \frac{\text{Area}(P'_{\ell_c})^2}{32(a^2+b^2)^2} + \frac{1}{a^3} \cdot O(m^3) \right] + \left[ \frac{\text{Area}(P'_{\ell_{(b-a)m}})^2}{32(a^2+b^2)^2} + \frac{1}{a^3} \cdot O(m^3) \right]\\
        &= \sum_{c=(b-a)m+1}^{2bm} \frac{\text{Area}(P'_{\ell_c})^2}{16(a^2+b^2)^2} + \frac{1}{a^2} \cdot O(m^4).
    \end{align*}
    Plugging in the formula for $\text{Area}(P'_{\ell_c})$ from \eqref{eqn:area} and summing using \texttt{Mathematica}, the above expression simplifies to
    \begin{equation*}
        f(a,b) \cdot m^5 + \frac{1}{a^2} \cdot O(m^4).
    \end{equation*}

    \begin{figure}[!t]
        \begin{subfigure}{0.31\textwidth}
            \centering
            \includegraphics[width=0.85\textwidth]{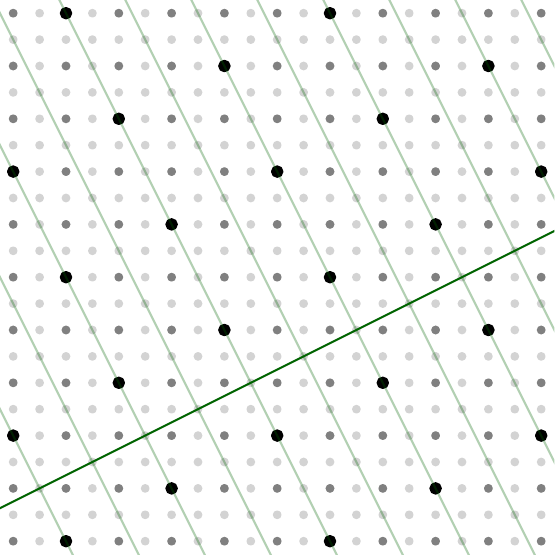}
            \caption{$a\not\equiv b \pmod 2$}
        \label{fig:abc(a)}
        \end{subfigure}
        \begin{subfigure}{0.31\textwidth}
            \centering
            \includegraphics[width=0.85\textwidth]{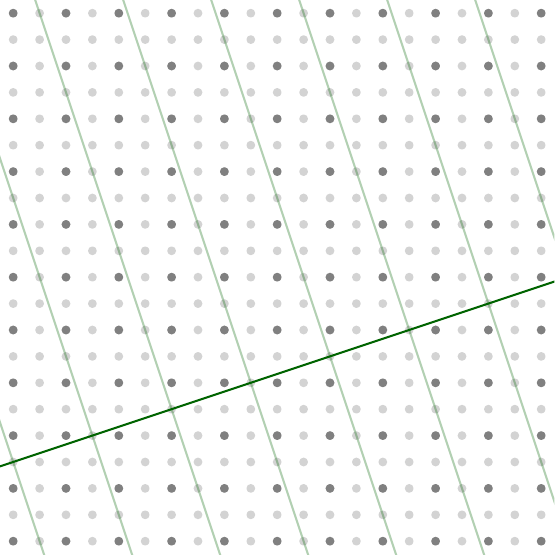}
            \caption{$a,b,c\equiv 1 \pmod2$}
        \label{fig:abc(b)}
        \end{subfigure}
        \begin{subfigure}{0.31\textwidth}
            \centering
            \includegraphics[width=0.85\textwidth]{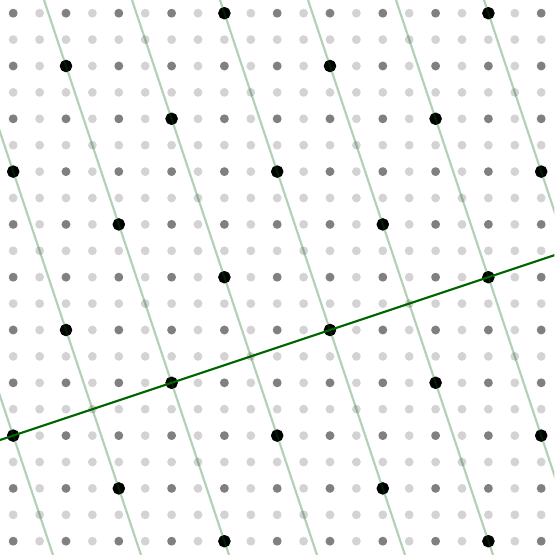}
            \caption{$a-b, c\equiv 0 \pmod2$}
        \label{fig:abc(c)}
        \end{subfigure}
        \caption{The sublattice $\mathcal{L} \subset (2\mathbb{Z})^2$ of points (marked in black) which project onto integer points on the line $\ell_c$ (shown in green) across different regimes for $a,b,c$.}
    \end{figure}

    \noindent \textbf{Case 2}. Suppose both $a$ and $b$ are odd. Let $c \in \Z$. If $c$ is odd, then there are no points in $(2\Z)^2$ whose projection onto $\ell_c$ is a lattice point (see Figure~\ref{fig:abc(b)}). Suppose $c$ is even. Then the set of points in $(2\Z)^2$ whose projection onto $\ell_c$ is a lattice point forms a sublattice $\mathcal{L}$ with determinant $2(a^2 + b^2)$ (see Figure~\ref{fig:abc(c)}). Using a similar argument as in case $1$, we conclude that the total number of isosceles trapezia with vertices in $2 \cdot [m]_0^2$ and their axis of symmetry in $L$ is equal to
    \begin{equation*}
        2 f(a,b) \cdot m^5 + \frac{1}{a^2} \cdot O(m^4).
    \end{equation*}
    
    Now summing over all possible values of $a$ and $b$, we conclude that the number of isosceles trapezia with vertices in $2 \cdot [m]_0^2$ and their axis of symmetry having slope between $0$ and $1$ equals
    \begin{align*}
        \sum_{a = 2}^{2m} \sum_{1 \le b < a:\; (a,b) = 1} &\frac{3 + (-1)^{a+b}}{2} \left[f(a,b) \cdot m^5 + \frac{1}{a^2} \cdot O(m^4)\right]\\
        &= \sum_{a = 2}^{2m} \sum_{1 \le b < a:\; (a,b) = 1} \frac{3 + (-1)^{a+b}}{2} \cdot f(a,b) \cdot m^5 + \sum_{a=2}^{2m} \frac{\phi(a)}{a^2} \cdot O(m^4)\\
        &= \sum_{a = 2}^{\infty} \sum_{1 \le b < a:\; (a,b) = 1} \frac{3 + (-1)^{a+b}}{2} \cdot f(a,b) \cdot n^5 + O(n^4 \log n).
    \end{align*}
    We multiply the above count by $4$ to account for isosceles trapezia with their axis of symmetry having a slope less than $0$ or greater than $1$. Finally, we add the count of isosceles trapezia with their axis of symmetry parallel to $x = 0$, $y = 0$, or $x \pm y = 0$. In the resulting count, we have also counted degenerate isosceles trapezia, namely, trapezia $ABCD$, for which $A$, $B$, $C$, and $D$ are collinear. Moreover, we have counted rectangles twice since they have two axes of symmetry. However, each of these counts is $\Theta(n^4 \log n)$. Subtracting these counts from the total count merges them with the error term, and we get the desired result.
\end{proof}

The sum appearing in the definition \eqref{eqn:c} of the constant $\gamma$ in Lemma~\ref{lem:symmetric} can be broken down into several parts, some of which we can express in terms of common number theoretic constants. However, since we are unable to express the entire sum in terms of such constants, we obtain rigorous numerical bounds on this constant in the lemma below.

\begin{lemma}
\label{lem:c}
    The constant $\gamma$ in Lemma~\ref{lem:symmetric} lies in the interval $(0.35974, 0.36017)$.
\end{lemma}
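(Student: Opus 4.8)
The plan is to certify $\gamma$ by splitting the double sum in \eqref{eqn:c} into a finite \emph{head} ($2 \le a \le A$) and an infinite \emph{tail} ($a > A$), evaluating the head rigorously by interval arithmetic and bounding the tail analytically. The two structural facts that make this work are that every summand is positive and that $f(a,b)$ decays like $a^{-3}$, so the tail contributes only $O(1/A)$.

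First I would record positivity and the decay rate. Writing $t = b/a \in (0,1)$, one has
\[
f(a,b) = \frac{1}{a^3}\, g(t), \qquad g(t) = \frac{20 + 25t - 7t^2 + 28t^3 - 20t^4 + 3t^5 - t^6}{240(1+t)^2(1+t^2)}.
\]
The numerator polynomial is positive on $[0,1]$ (it equals $20$ at $t=0$ and $48$ at $t=1$, and the positive-coefficient terms dominate throughout), so $f(a,b) > 0$. Since the parity factor $2(3+(-1)^{a+b}) \in \{4,8\}$ and $4/15$ are also positive, \emph{every} partial sum over $2 \le a \le A$ is a valid lower bound for $\gamma - 4/15$. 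Thus the head alone furnishes the lower bound, once $A$ is large enough.

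Next I would make the tail bound explicit. A short estimate shows $g(t) \le M$ on $[0,1]$ with the explicit constant $M = g(0) = 1/12$ (one checks $g'(0)<0$ and that $g$ does not exceed $g(0)$ on $[0,1]$), hence $f(a,b) \le M a^{-3}$. Bounding the parity factor by $8$ and using $\phi(a) \le a$ together with $\sum_{a > A} a^{-2} < 1/A$,
\[
0 \le \sum_{a > A}\ \sum_{\substack{1 \le b < a \\ (a,b) = 1}} 2(3+(-1)^{a+b}) f(a,b) \le 8M \sum_{a > A} \frac{\phi(a)}{a^3} \le \frac{8M}{A}.
\]
Taking $A$ of order $10^4$ makes $8M/A$ comfortably smaller than the width $0.36017 - 0.35974$ of the target interval, and (since the true tail is even smaller) keeps the head above the required threshold.

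Finally I would compute the head $H = \sum_{2 \le a \le A}\sum_{(a,b)=1,\,b<a} 2(3+(-1)^{a+b}) f(a,b)$ rigorously. As each $f(a,b)$ is an explicit rational, this is done either in exact rational arithmetic or, more efficiently, in interval arithmetic with outward rounding, yielding a certified enclosure $H \in [H^-, H^+]$. Combining the three ingredients gives
\[
\tfrac{4}{15} + H^- \le \gamma \le \tfrac{4}{15} + H^+ + \frac{8M}{A},
\]
and for $A$ of order $10^4$ both ends fall inside $(0.35974, 0.36017)$. The main obstacle is rigour of the numerics rather than mathematical depth: one must ensure the evaluation of $H$ is genuinely certified (hence interval arithmetic with outward rounding rather than naive floating point) and that the elementary bound $g \le M$ feeding the tail estimate is proved honestly for all $t \in [0,1]$ rather than merely sampled.
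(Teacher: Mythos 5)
Your proposal follows essentially the same route as the paper: split the double sum in \eqref{eqn:c} into a finite head evaluated by computer and an infinite tail bounded via $f(a,b)=O(a^{-3})$ together with $\phi(a)\le a$ and $\sum_{a>A}a^{-2}\le 1/A$ (the paper takes $N=5500$ and the cruder tail constant $2.2$, whereas you propose $A\sim 10^4$ with the sharper bound $g(t)\le g(0)=1/12$). Your additional insistence on interval arithmetic with outward rounding and an honest proof of $g\le 1/12$ on $[0,1]$ is a reasonable tightening of the same argument, not a different method.
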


\begin{proof}
    For $N \ge 2$, let $s_N$ be the partial sum
    \begin{equation*}
        \sum_{a=2}^{N} \sum_{1 \le b < a:\; (a,b) = 1} 2(3+(-1)^{a+b}) f(a,b).
    \end{equation*}
    We compute $s_N$ in \texttt{Mathematica} for $N = 5500$ to an accuracy of $\varepsilon = 10^{-5}$ to obtain
    \begin{equation*}
        \abs{s_N - \widetilde{s}_N} \le \varepsilon,
    \end{equation*}
    where $\widetilde{s}_N =  0.09309$. Next, we bound the tail sum
    \begin{equation*}
        0 \le \gamma - s_N \le 8 \cdot \sum_{a=N+1}^{\infty} \frac{66\phi(a)}{240a^3} \le 2.2 \cdot \sum_{a=N+1}^{\infty} \frac{1}{a^2} \le \frac{2.2}{N}.
    \end{equation*}
    Combining the above estimates and using them in \eqref{eqn:c} yields
    \begin{equation*}
        \gamma - \frac{4}{15} \in \left[\widetilde{s}_N -\varepsilon, \widetilde{s}_N + \varepsilon+\frac{2.2}{N}\right].
    \end{equation*}
    Plugging in the values of $N$, $\widetilde{s}_N$, and $\varepsilon$, and simplifying yields the desired result.
\end{proof}

Applying Lemma~\ref{lem:asymmetric} to the point set $A = [n]^2$ implies that there are $O(n^{4+\frac{18}{29}+\varepsilon})$ asymmetric cyclic quadrilaterals with vertices in $[n]^2$. Combining this with Lemma~\ref{lem:symmetric} yields Theorem~\ref{thm:circle-count}.

\begin{proof}[Proof of Corollary~\ref{thm:circle}]
    By Proposition \ref{prop:line-count}, the number of unordered collinear quadruples is $\frac{7\pi^2}{360\zeta(3)}\cdot n^5+O(n^4\log(n))$. Combined with Theorem~\ref{thm:circle-count}, we get that the number of edges in the hypergraph encoding collinear and concyclic quadruples in $[n]^2$ is $(\gamma+\frac{7\pi^2}{360\zeta(3)}+o(1))n^5<cn^5$, with $c=0.51983$, using Lemma \ref{lem:c}. Finally, by Lemma \ref{lem:deletion}, we have that $f_{circ}(n)\geq\frac{3}{4}(\frac{1}{4c})^{\frac{1}{3}}n>\frac{7}{12}n$.
\end{proof}

\section{Concluding remarks}
\label{sec:conclusion}

To conclude, we highlight some open problems and potential directions for future research.

\subsection{Improving bounds on $S(n,d)$}
\label{sec:improvedspherecount}

As mentioned in the introduction, we conjecture that the following stronger upper bound on $S(n,d)$ holds.

\begin{conjecture}
\label{conj:sphere-counting}
    For any integer $d\ge 2$, we have $S(n,d)=O(n^{d^2+d-1})$.
\end{conjecture}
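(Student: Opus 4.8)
The plan is to sharpen the upper-bound argument for Theorem~\ref{thm:sphere-count} by replacing the generic incidence input (Lemma~\ref{lem:Lund}) with a bound tailored to the lifted point set $\Psi(P)$. Keeping the nondegeneracy bookkeeping from the proof of Theorem~\ref{thm:sphere-count} (the partition into $\mathcal{N}$, $\mathcal{D}'$, and $\mathcal{D}\setminus\mathcal{D}'$, together with Lemmas~\ref{lem:sphere-count} and \ref{lem:nondegen}), the whole estimate reduces to controlling
\[
\Sigma \defeq \sum_{S} |S \cap [n]^d|^{d+2},
\]
the sum over (finite-radius) $(d-1)$-spheres $S$ whose lattice points span $S$, of the $(d+2)$-nd power of their lattice-point count; the $(d+2)$-tuples whose spherical span is a lower-dimensional sphere must be bounded separately, but by the same method applied recursively (an induction on the spherical dimension $1 \le k \le d-1$, the $k=d-1$ term being the main one). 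For $d=2$ the estimate $\Sigma = O(n^{5}) = O(n^{d^2+d-1})$ is precisely Theorem~\ref{thm:circle-count}, so the target is $d \ge 3$, and the planar proof via Huxley--Konyagin (Lemma~\ref{lem:asymmetric}) is the template I would try to emulate.

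By Lemma~\ref{lem:sphere-count} every sphere satisfies $|S \cap [n]^d| \le n^{d-2+o(1)}$, so a dyadic decomposition over the richness $t = |S\cap[n]^d|$ reduces the conjecture to the single estimate
\[
\mathcal{F}(t) \defeq \#\brax{(d-1)\text{-spheres } S : |S\cap[n]^d| \ge t} \lesssim n^{d^2+d-1}\, t^{-(d+2)}, \qquad d+2 \le t \le n^{d-2+o(1)},
\]
since then $\Sigma \lesssim \sum_{\text{dyadic } t} \mathcal{F}(t)\, t^{d+2} \lesssim n^{d^2+d-1}\log n$, and the stray logarithm is removed by a small extra decay (or absorbed into an $n^{o(1)}$ slack). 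Lund's theorem applied to $\Psi(P)$ yields only $\mathcal{F}(t) \lesssim n^{d(d+1)} t^{-(d+2)} + n^{d^2} t^{-d}$: the first summand loses a factor $n\log n$ after summation (this is the source of the $n^{d^2+d}\log n$ error, dominant when $d=3$), and the second loses a factor $n^{d-3}$ at the top scale $t \approx n^{d-2}$ (the source of the $n^{d^2+2d-4}$ error, dominant when $d\ge4$). Both losses arise because Lemma~\ref{lem:Lund} is a worst-case bound for arbitrary point sets that ignores the arithmetic of our flats, each of which meets the paraboloid $\Psi([n]^d)$ in a sphere.

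To prove the rich-sphere estimate I would argue arithmetically. A sphere carrying many lattice points is highly non-generic, so I would first aim for a structure theorem: a $t$-rich $(d-1)$-sphere must have centre $x_0$ and squared radius $\rho$ with a common denominator $q$ that is polynomially bounded in terms decreasing as $t$ grows, confining rich spheres to a sparse parameter set of admissible triples $(q,x_0,\rho)$; one then counts rich radii for each admissible denominator through theta-function and lattice-points-in-annuli estimates, the count of lattice points at squared-distance $\rho$ from $x_0$ being a restricted representation function. A sanity check is available at the top scale: the lower-bound construction already produces $\gtrsim n^{d+2}$ lattice-centred spheres with $\approx n^{d-2}$ points, whereas the target forces $\mathcal{F}(n^{d-2}) \lesssim n^{d+3}$, allowing only a factor-$n$ excess; thus the content of the conjecture is that essentially all maximally rich spheres are lattice-centred (up to small denominators). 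As in the planar case I would expect the main contribution to come from \emph{symmetric} tuples -- those invariant under a reflection, the higher-dimensional analogue of isosceles trapezia -- which can be enumerated from their rational hyperplane of symmetry, with the \emph{asymmetric} tuples forming a rare remainder.

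The main obstacle is exactly this rich-sphere count in dimension $d \ge 3$. The planar bound of Huxley and Konyagin (Lemma~\ref{lem:asymmetric}) rests on unique factorisation in $\Z[i]$, which has no analogue for sums of three or more squares; as already noted in the excerpt, such approaches are unlikely to succeed in higher dimensions. Consequently, controlling how many spheres carry substantially more than the generic $d+1$ lattice points seems to require deep equidistribution results for lattice points on spheres (in the spirit of Duke's theorem and Linnik's ergodic method) or a genuinely new arithmetic--combinatorial inequality that improves on the first-moment identity $\sum_S |S\cap[n]^d|^{d+1} = (1+o(1))\,n^{d(d+1)}$, which on its own gives only the trivial bound $\Sigma \le n^{d-2+o(1)} \cdot n^{d(d+1)}$. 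Absent such an input, the remaining gap between the $n^{d^2+d-2}$ lower bound and the conjectured $n^{d^2+d-1}$ upper bound -- ultimately a single factor of $n$ -- appears to be the crux.
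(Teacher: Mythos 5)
This statement is a \emph{conjecture}: the paper does not prove it for any $d\ge 3$, and only confirms the case $d=2$ via Theorem~\ref{thm:circle-count}; for $d\ge 3$ the paper offers heuristics (joint spatial and modular equidistribution of lattice points on spheres, and a random-matrix singularity analogy) but no argument. Your proposal likewise does not constitute a proof, and you essentially say so yourself. Your bookkeeping is accurate — the dyadic reduction to the rich-sphere count $\mathcal{F}(t)\lesssim n^{d^2+d-1}t^{-(d+2)}$, the diagnosis of exactly where Lund's bound loses the factors $n\log n$ and $n^{d-3}$, and the sanity check at $t\approx n^{d-2}$ are all correct and consistent with the paper's proof of Theorem~\ref{thm:sphere-count} — but the two load-bearing steps are never established. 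First, the ``structure theorem'' confining $t$-rich $(d-1)$-spheres to a sparse set of admissible centres and radii, and the ensuing count of symmetric versus asymmetric conspheric tuples, are precisely the content of the conjecture; the only known instance is Huxley--Konyagin's planar result, whose proof runs through unique factorisation in $\Z[i]$ and has no analogue for sums of three or more squares, as the paper itself notes. Invoking Duke/Linnik-type equidistribution is a plausible direction but no such statement strong enough to control the $(d+2)$-nd moment of the lattice-point counts is currently available.

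Second, your remark that tuples of lower spherical dimension can be ``bounded separately, but by the same method applied recursively'' hides a real issue: the paper's own bound \eqref{eqn:T-T'} on tuples contained in a $(d-2)$-sphere is $n^{d^2+2d-6+o(1)}$, which already exceeds the conjectured $n^{d^2+d-1}$ once $d\ge 6$. So even the ``degenerate'' part of the count requires the unproven rich-sphere estimate at every spherical dimension $k$, not just $k=d-1$, and the induction you gesture at cannot get off the ground without it. In short: the reduction is sound and matches the paper's framing, but the crux — a higher-dimensional substitute for Lemma~\ref{lem:asymmetric} — is missing, and with it the proof.
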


We have confirmed this conjecture for $d = 2$ in Theorem~\ref{thm:circle-count}. For $d \geq 3$, it is plausible that an even stronger bound holds, namely, $S(n, d) = n^{d^2+d-2+o(1)}$. If true, this would imply that the lower bound in Theorem~\ref{thm:sphere-count} is sharp up to a $o(1)$ term in the exponent. Exact computations of $S(n, d)$ for small values of $n$ and $d$ seem consistent with the above estimates.

Assuming joint spatial and modular equidistribution of lattice points on spheres of the form $x_1^2 + \dots + x_d^2 = m$, where $m \in \N$, yields a heuristic for the stronger upper bound mentioned above. Here, spatial equidistribution refers to equidistribution with respect to the Lebesgue measure on the sphere, and modular equidistribution refers to equidistribution of mod $k$ residues of the lattice points on the sphere over the mod $k$ solutions to the equation defining the sphere, where $k \in \N$. By joint equidistribution, we mean that modular equidistribution holds locally within small slices of the sphere. Spatial equidistribution of lattice points on spheres is known classically (see \cite[Section~11.6]{Iwa}), but not in conjunction with modular equidistribution. We remark that this approach fails in two dimensions due to a special symmetry corresponding to isosceles trapezoids.

Finally, we also highlight a connection to random matrix theory. Let $\mathbf{x}^{(1)}, \dots, \mathbf{x}^{(d+2)}$ be $d+2$ points in $[n]^d$. If they lie on a $(d-1)$-dimensional sphere, then the matrix
\begin{equation*}
    A(\mathbf{x}^{(1)}, \dots, \mathbf{x}^{(d+2)}) \coloneqq
    \begin{bmatrix}
        1 & x^{(1)}_1 & \dots & x^{(1)}_d &\|\mathbf{x}^{(1)}\|_2^2\\
        \vdots & \vdots & \ddots & \vdots & \vdots\\
        1 & x^{(d+2)}_1 & \dots & x^{(d+2)}_2 & \|\mathbf{x}^{(d+2)}\|_2^2
    \end{bmatrix}
\end{equation*}
is singular. For $i \in [d+2]$ and $j \in [d]$, let $X^{(i)}_j$ be iid random variables with distribution $\text{Uniform}([n])$. Then $S(n,d)$ is bounded above by
\begin{equation}
\label{eqn:prob}
    \mathbb{P}(\det(A(\mathbf{X}^{(1)}, \dots, \mathbf{X}^{(d+2)})) = 0) \cdot n^{d(d+2)}.
\end{equation}
For discrete random matrices with iid entries, the dominant contribution to the singularity probability is from the events that a row (or column) is zero or that two rows (or columns) are equal~\cite{JSS} (see also \cite{Tik}). Similarly, it might be the case that this is also the dominant term in the singularity probability in \eqref{eqn:prob}, which would imply an $O(n^{d^2+d})$ bound on $S(n, d)$. To prove Conjecture~\ref{conj:sphere-counting}, one would need better estimates on the error term.

\subsection{Extensible no-four-on-a-circle}

The classical no-three-in-line problem seeks to maximize the number of points that may be selected from $[n]^2$ while avoiding collinear triples of points. Erde asked whether one can select large subsets of $\Z^2$ that avoid collinear triples. Nagy, Nagy, and Woodroofe~\cite{NNW} showed that there exists such a set with $\Theta(n/\log^{1+\varepsilon} n)$ points in $[n]^2$ for all $n \in \N$. Considering a similar extensible version of the no-four-on-a-circle problem would be interesting.

\subsection{No-three-in-line in $[n]^d$}

Among the problems on avoiding affine degeneracies discussed previously, of special significance is the no-three-in-a-line problem in high dimension due to its connection with lower bounds on $r_3(n)$, the size of the largest $3$-AP-free subset of $[n]$ -- see Behrend's~\cite{Beh} and Elkin's construction~\cite{Elk}. As discussed in the introduction, we know that $f_{\text{aff}}(n,d,1,3) = \Omega(n^{d-2+\frac{2}{d+1}})$, and this is the best one can do using a strictly convex surface. Any improvement to this lower bound would correspond to an improvement over Elkin's lower bound on $r_3(n)$. Elkin's bound was improved recently by Elsholtz, Hunter, Proske, and Sauermann~\cite{EHPS}, but a good enough improvement to the lower bound $f_{\text{aff}}(n,d,1,3)$ might yield an even better bound on $r_3(n)$. In any case, we think that determining the order of growth $f_{\text{aff}}(n,d,1,3)$ is interesting even in its own right. Note that we only have the trivial upper bound $f_{\text{aff}}(n,d,1,3)=O(n^{d-1})$.


\end{document}